\pgfplotsset{compat=1.15}
\newtheorem{theorem}{Theorem}[section]
\newtheorem{corollary}[theorem]{Corollary}
\theoremstyle{definition}
\newtheorem{example}[theorem]{Example}
\newtheorem{remark}[theorem]{Remark}
\numberwithin{equation}{section}
\title[$\mathcal{AN}$ quasi-$\ast$-paranormal operators]{Some Classes of Absolutely Norm Attaining Weighted Shift operators on Directed Graphs}
\author{K KRISHNAN}
\address{K Krishnan\newline Department of Mathematics, University College (Affiliated to University of Kerala), Thiruvananthapuram, Kerala,
	India- 695034.}
\email{krishnank@universitycollege.ac.in}
\author{T. PRASAD}
\address{T. Prasad \newline Department of Mathematics, University of Calicut, Kerala-673635, India.}
\email{prasadvalapil@gmail.com}
\author{E. SHINE LAL}
\address{E. Shine Lal \newline Department of Mathematics, University College (Affiliated to University of Kerala), Thiruvananthapuram, Kerala,
	India- 695034.}
\email{shinelal@universitycollege.ac.in}
\keywords{Norm attaining operators, absolutely norm attaining operators, quasi-$\ast$-paranormal operators, weighted shifts on directed trees.}
\subjclass[2010]{47A10, 47A15, 47A65, 47A67, 47B20}
\begin{document}
	\maketitle
	\begin{abstract}
		In this paper we study absolutely norm attaining quasi-$\ast$-paranormal  weighted shifts on directed graphs and give some examples. Moreover we give some examples which show that the spectrum of a positive absolutely norm attaining operator containing more than one eigenvalue with infinite multiplicity. Later we investigate weighted composition and Lambert operators on directed graphs.
	\end{abstract}
	\section{Introduction and Preliminaries}
	Let $\mathcal{B}(\mathcal{H}_1,\mathcal{H}_2)$ denote the class of all bounded linear operators between Hilbert spaces $\mathcal{H}_1$ and $\mathcal{H}_2$ and $\mathcal{B(H)}$, the space of all operators defined on a Hilbert space $\mathcal{H}$. We denote the class of all norm attaining operators on a Hilbert space $\mathcal{H}$ by $\mathcal{N(H)}$ and the class of all absolutely norm attaining operators on $\mathcal{H}$ by $\mathcal{AN(H)}$. The class of norm attaining and absolutely norm attaining operators were introduced by Carvajal and Neves \cite{Operatorsthatachievethenorm}. An operator $T \in \mathcal{B}(\mathcal{H}_1,\mathcal{H}_2)$ is said to be norm attaining if there exists a non-zero $x \in \mathcal{H}_1$ such that $\|Tx\|=\|T\|\|x\|$ and is said to be absolutely norm attaining if $T|_M$ is norm attaining for every closed subspace $M$ of $\mathcal{H}_1$.  It is clear that $\mathcal{AN(H)} \subset \mathcal{N(H)}$. For more details, see \cite{Operatorsthatachievethenorm}.
	
	We denote the range space and null space of the operator $T \in \mathcal{B(H)}$ by $R(T)$ and $N(T)$ respectively. An operator $T$ is said to be an isometry if $\|Tx\|=\|x\|$ for every $x \in \mathcal{H}$ and is said to be a partial isometry if $T|_{N(T)^{\perp}}$ is an isometry where $N(T)^{\perp}$ is the orthogonal complement of $N(T)$ \cite{furuta}.  An operator $T \in \mathcal{B}(\mathcal{H})$ is normal if $T^*T=TT^*$, hyponormal if $\|T^*x\| \leq \|Tx\|$, paranormal if $\|Tx\|^2 \leq \|T^2x\|\|x\|$, $\ast$-paranormal if $\|T^*x\|^2 \leq \|T^2x\|\|x\|$ and quasi-$\ast$-paranormal operator if 
	$\|T^*Tx\|^2 \leq \|T^3x\|\|Tx\| \text{ for all } x \in \mathcal{H}$ where $T^*$ denotes the adjoint of $T$ \cite{furuta, quasi*}. In general, the following relation holds.
	\[
	\text{hyponormal $\subseteq$ $\ast$-paranormal $\subseteq$ quasi-$\ast$-paranormal}
	\]
	and 
	\[
	\text{hyponormal $\subseteq$ $\ast$-paranormal $\subseteq$ $n$-$\ast$-paranormal.}
	\]
	A closed subspace $M$ of $\mathcal{H}$ is said to be invariant under $T \in \mathcal{B(H)}$ if $T(M) \subseteq M$ and is said to be a reducing subspace if both $M$ and $M^{\perp}$ are invariant under $T$.
	
	Jab\l o\'nski, Jung and Stochel \cite{graph} have introduced weighted shifts on directed trees and study hyponormal and subnormal operators on directed trees. Recently backward shifts, Toeplitz operators and the operator $\nabla$ have been studied on directed tree setting (c.f \cite{graph1, gr2,gr4,gr3}).
	
	Let $G=(V,E)$ be a directed graph with vertex set $V$ and $E \subset V \times V$ be such that $(u,v)\in E$ means that $u$ is the parent of $v$ (denoted by $par(v)$) and $v$ is the child of $u$ (denoted by $chi(u)$). Note that  $chi^{(n)}(v)=chi(chi^{(n-1)}(v))$ and $par^{(n)}(v)=par(par^{(n-1)}(v))$. It is clear that $chi^{(0)}(v)=v=par^{(0)}(v)$. A vertex $v\in V$ is said to be a root of $G$ if there is no $(u,v)\in E$ for any $u \in V$. The set $V^0$ denotes $V \setminus root(G)$.
	
	Let $\ell^2(V)$ denote the Hilbert space of all square summable complex functions on $V$ with inner product 
	\[\langle f, g\rangle = \sum\limits_{u \in V} f(u)\overline{g(u)}; f,g \in \ell^2(V).\] 
	The set $\{e_u\}_{u\in V}$ where $e_u(v)=\begin{cases}
		1, & u=v\\ 0, & \text{otherwise}
	\end{cases}$ forms an orthonormal basis for $\ell^2(V)$. Let $\|\cdot \|_T$ and $\langle \cdot,\cdot \rangle_T$ denote the graph norm and graph inner product of $T$ defined by $\|f\|_T= \|f\|^2 +\|Tf\|^2$ and $\langle f,g \rangle_T = \langle f,g \rangle + \langle Tf,Tg \rangle$ respectively for every $f,g$ in domain of $T$. For $(\lambda_v)_{v\in V^0}$, the weighted shift $S_{\lambda}$ on $\ell^2(V)$ is given by,
	\[S_{\lambda} f(v)= \begin{cases}
		\lambda_v f(par(v)),& \text{if $v$ is not a root}\\ 0,& \text{if $v$ is a root}
	\end{cases}\] and the corresponding adjoint operator is defined as,
	\[S_{\lambda}^* f(v)= \sum\limits_{u\in chi(v)}\overline{\lambda_u}f(u), \ \ f\in \ell^2(V).\] For more details, refer \cite{graph}.
	
	Let $(V, \mathscr{V},\mu)$ be the measure space associated with the directed graph $G$ and positive measure $\mu$. Let $\psi: V \to V$ be a non-singular measurable function such that $\psi^{-n}(\mathscr{V}) \subseteq \mathscr{V}$ for $n \in \mathbb{N}$ and the measure $\mu \circ \psi^{-n}$ is absolutely continuous with respect to $\mu$. Let $h_n= \dfrac{d\mu \circ \psi^{-n}}{d\mu}$ denote the corresponding Radon-Nikodym derivative. Consider $L^2(V, \mathscr{V},\mu):=L^2(\mu)$ as the Hilbert space of all square integrable measurable functions and let $E_n(f|\psi^{-n}\mathscr{V}):=E_n(f)$ be the orthogonal projection onto the closed subspace $L^2(V, \psi^{-n}\mathscr{V},\mu)$ satisfying $\int_{S} f d\mu = \int_{S} E_n(f)d \mu$ for every $S\in \psi^{-n}\mathscr{V}$ called the conditional expectation operator. Since $\psi^{-n}\mathscr{V}$ is purely atomic,
	\[E_n(f)= \sum_{S_k} \dfrac{1}{\mu(S_k)}\left(\int_{S_k}f d\mu\right)\chi_{S_k}\]
	where $(S_k)_{k\in \mathbb{N}}$ be the disjoint collection of atoms that spans $\psi^{-n}\mathscr{V}$ and $\chi_{S_k}$ be the indicating function.
	
	Let $u: V\to \mathbb{C}$ be a measurable weight function. The weighted composition operator, $W:= W_{u,\psi^n}$ is defined as
	$(Wf)(x)= u(x)\cdot f(\psi^n(x))$ and the corresponding adjoint operator by $(W^*f)(x)= h_n(x)\cdot F(x)$ where $F \circ \psi^n = E_n(\bar{u}f)$, for every $f \in L^2(\mu)$. It is well known that $W^{*^n}W^n f= h_nE_n(|u|_n^{2})\circ \psi^{-n}f$ where $|u|_n^2= |u|^2\cdot |u\circ \psi|^2 \cdots |u \circ \psi^{n-1}|^2$ and $W^nW^{*^n}f= u_n\cdot h_n\circ \psi^n \cdot E_n(\bar{u_n}f)$ where $u_n= u\cdot u\circ \psi \cdots u \circ \psi^{n-1}$. Let $\psi(v)= par(v)$. 
	Let   $J_m=\{1,2,\cdots,m\}$,  $m\in \mathbb{N}$ and let $\eta_r\in\mathbb{N}\cup\{0\}$, $r\in J_m$.  Suppose that at least one of $\eta_r$ is non-zero for $r\in J_m$ and 
	
	\begin{equation} \label{eqn1}
		\nonumber V=\{v_1,v_2,\cdots,v_m\}\cup\bigcup_{r=1}^{m}\bigcup_{i=1}^{\eta_r} \{v^r_{i,j}: j\in\mathbb{N}\},
	\end{equation}
	where 
	$V_m=\{v_1,v_2,\cdots,v_m\}$ and $V_{\eta_r}=\bigcup_{i=1}^{\eta_r} \{v^r_{i,j}: j\in\mathbb{N}\}$ ($r\in J_\kappa$)  are  disjoint sets of distinct points of $V$.  For weighted composition operators, we consider $V$ as a  directed graph with one circuit $\{v_1,v_2,\cdots,v_k\}$, the set of  branching vertices in the one-circuit   
	and  $V_{\eta_r}$,  the set of branching elements for $r\in J_m$ where $\{v^r_{i,j}: j\in\mathbb{N}\}$ is the set of all vertices in the $i^{th} $ branch of  $v_r$ for $i\in J_{\eta_r}$ and  $\eta_r$ is the number of branches originating from the vertex  $v_r$. A more generalized version of this graph has been considered in \cite{graph b}. For $p\in \mathbb{N}$ and let $\Psi: \mathbb{Z}\rightarrow J_m$ be the unique function such that $p = k.m + \Psi(p)$. In \cite{m-isometry graph}, authors proved the following:
	
	\[\psi^p(v)= \begin{cases}
		v^r_{i,j} & \text{ if } v=v^r_{i,j+p} \text{ for }r\in J_\kappa i\in J_{\eta_r},\text{ and } j\in \mathbb{N} ,\\
		v_r& \text{ if } v=v^s_{i,j} \text{ for } s\in J_\kappa,  \text{ and } \Psi(p+r)=\Psi(s+j), j\in J_p, i\in J_{\eta_s},\\
		&\text{ or } v=v_{\Psi(p+r)}.
		
	\end{cases}
	\]
	
	\[\psi^{-p}(\{v\})= 
	\begin{cases}
		\{v^r_{i,j+p}\}& \text{ if } v=v^r_{i,j}, r\in J_\kappa , i\in J_{\eta_r},j\in \mathbb{N},\\
		\{v_{{\Psi}_(p+r)}\}\cup\bigcup\limits_{j=1}^p\bigcup\limits_{\substack{s=1\\   \Psi(p+r)=\Psi(s+j)}}^\kappa\bigcup\limits_{i=1}^{\eta_s}
		\{v^s_{i, j}\} &\text{ if } v=v_r,r\in J_\kappa 
	\end{cases}\]
	\[h_p(x)= \begin{cases}
		\frac{\mu(v^r_{i,j+p})}{\mu(v^r_{i, j})} & \text{ if } v=v^r_{i,j}, r\in J_m, i\in J_{\eta_r}, j\in \mathbb{N},\\
		\dfrac{\mu(v_{{\Psi}_(p+r)})+\sum\limits_{j=1}^p\sum\limits_{\substack{s=1\\ \Psi(p+r)=\Psi(s+j)} }^{m}
			\sum\limits_{i=1}^{\eta_{s}} \mu(v^s_{i,j})}{\mu(\{v_r\})}	 &\text{ if } v=v_r, r\in J_m
	\end{cases}.
	\]See \cite{m-isometry graph} for more details.
	
	Recently Pandey and Paulsen \cite{spectralcharacterisationofAN} proved that the spectrum of a positive $\mathcal{AN}$ operators contains atmost one eigenvalue with infinite multiplicity.  Later Venku Naidu and Ramesh characterised positive, self adjoint and normal absolutely norm attaining operators \cite{onabsnorm}. Bala and Ramesh characterised absolutely norm attaining hyponormal operators \cite{hyponormalabs}. Then absolutely norm attaining paranormal and $\ast$-paranormal operators were characterised by Ramesh and Bala respectively \cite{hyponormalabs, ANparanormal, Representationandnormalityof*-paranormalAN}. In this paper, we give examples of $\mathcal{AN}$ positive operators
	defined on $\ell^2(V)$ having more than one eigenvalue with infinite multiplicity in the spectrum. Also we study $\ast$-paranormal and quasi-$\ast$-paranormal operators on $\ell^2(V)$ setting and give some examples. We study some properties of absolutely norm attaining quasi-$\ast$-paranormal operators. In the last section, we study quasi-$\ast$-paranormal weighted composition operators and Lambert operators on graph settings and illustrate some examples. We also comment on norm attaining weighted composition operators.
	\section{$\mathcal{AN}$ $\ast$-paranormal weighted shift operators}
	In this section, we address the norm attaining  weighted shift operators defined on a directed tree. We further study norm attaining $\ast$-paranormal operators. 
	
	Consider the $\ast$-paranormal weighted shifts on directed graph settings.	
	\begin{theorem}
		Let $S_{\lambda}$ be the $\ast$-paranormal weighted shift on $\ell^2(V)$ with weights $(\lambda_v)$, then  	\[\left(\sum\limits_{u\in V} \left|\sum\limits_{v\in chi(u)}\overline{\lambda_v}f(v)\right|^2\right)^2 \leq \left(\sum\limits_{u\in V}\|S_{\lambda}^2e_u\|^2 |f(u)|^2\right)\left(\sum\limits_{u\in V}|f(u)|^2\right)\]
		for every $f \in \ell^2(V)$.
	\end{theorem}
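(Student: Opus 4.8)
The plan is to read the asserted inequality as the coordinatized, squared version of the defining $\ast$-paranormal estimate
\[
\|S_{\lambda}^{*}f\|^{2}\le \|S_{\lambda}^{2}f\|\,\|f\|,\qquad f\in\ell^{2}(V),
\]
and to verify that the three factors occurring there match the three sums in the statement. Accordingly I would treat the right-hand factor $\sum_{u\in V}|f(u)|^{2}=\|f\|^{2}$ as immediate, and concentrate on identifying the other two norms.

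First I would compute $\|S_{\lambda}^{*}f\|^{2}$ directly from the adjoint formula $S_{\lambda}^{*}f(v)=\sum_{u\in chi(v)}\overline{\lambda_u}f(u)$. Summing $|S_{\lambda}^{*}f(v)|^{2}$ over $v\in V$ and relabelling the summation index gives
\[
\|S_{\lambda}^{*}f\|^{2}=\sum_{u\in V}\Bigl|\sum_{v\in chi(u)}\overline{\lambda_v}f(v)\Bigr|^{2},
\]
which is exactly the quantity whose square is the left-hand side of the claim. After this step the left-hand side is recognised as $\|S_{\lambda}^{*}f\|^{4}$.

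The heart of the argument is the identity
\[
\|S_{\lambda}^{2}f\|^{2}=\sum_{u\in V}\|S_{\lambda}^{2}e_u\|^{2}\,|f(u)|^{2},
\]
which is where the directed-tree hypothesis is essential. I would first compute $S_{\lambda}e_u=\sum_{v\in chi(u)}\lambda_v e_v$ and then $S_{\lambda}^{2}e_u=\sum_{w\in chi^{(2)}(u)}\lambda_{par(w)}\lambda_w\,e_w$, so that the support of $S_{\lambda}^{2}e_u$ lies in the set $chi^{(2)}(u)$ of grandchildren of $u$. The key observation is that in a directed tree every non-root vertex has a \emph{unique} parent, hence a unique grandparent; consequently the map $w\mapsto par^{(2)}(w)$ is single-valued, and its fibres $chi^{(2)}(u_1)$ and $chi^{(2)}(u_2)$ are disjoint whenever $u_1\neq u_2$. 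Thus the family $\{S_{\lambda}^{2}e_u\}_{u\in V}$ has pairwise disjoint supports and is therefore orthogonal. Writing $f=\sum_{u}f(u)e_u$, using the boundedness (hence continuity) of $S_{\lambda}^{2}$ to pass the operator through the sum, and invoking the Pythagorean theorem yields the displayed identity.

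Finally I would substitute the two computed expressions into $\|S_{\lambda}^{*}f\|^{2}\le\|S_{\lambda}^{2}f\|\,\|f\|$ and square both sides, producing precisely the inequality of the theorem. I expect the main obstacle to be the orthogonality step: one must justify that distinct basis vectors are sent by $S_{\lambda}^{2}$ to vectors with disjoint supports, and this is exactly the place where the tree structure cannot be dispensed with, since in a general directed graph a vertex may have several parents and the grandchildren sets could overlap. The interchange of the operator with the infinite sum defining $f$ likewise relies on the standing assumption that $S_{\lambda}$ is bounded, which is built into the hypothesis that $S_{\lambda}$ is $\ast$-paranormal.
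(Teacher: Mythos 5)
Your proposal is correct and follows essentially the same route as the paper: both start from the defining inequality $\|S_{\lambda}^{*}f\|^{2}\le\|S_{\lambda}^{2}f\|\,\|f\|$ and identify the three sums in the statement as $\|S_{\lambda}^{*}f\|^{4}$, $\|S_{\lambda}^{2}f\|^{2}$ and $\|f\|^{2}$. The only difference is that you rigorously justify the key identity $\|S_{\lambda}^{2}f\|^{2}=\sum_{u\in V}\|S_{\lambda}^{2}e_{u}\|^{2}|f(u)|^{2}$ via the pairwise disjoint supports of the vectors $S_{\lambda}^{2}e_{u}$ (uniqueness of grandparents in a tree) and the Pythagorean theorem, whereas the paper asserts this identity after a rather loose pointwise computation of $S_{\lambda}^{2}f(u)$; your version supplies exactly the detail the paper glosses over.
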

	\begin{proof}
		Since $S_{\lambda}$ be $\ast$-paranormal,
		\[\|S_{\lambda}^*f\|^2 \leq \|S_{\lambda}^2f\|\|f\| \mbox{ for all }f \in \ell^2(V).\]
		We have,
		\[\|S_{\lambda}^*f\|^2 = \sum\limits_{u\in V} \left|\sum\limits_{v\in chi(u)}\overline{\lambda_v}f(v)\right|^2\]
		and 
		\[\|f\|^2 = \sum\limits_{u\in V}|f(u)|^2.\]
		Now,
		\begin{align*}
			S_{\lambda}^2f(u)&= S_{\lambda}\left( \lambda_u S_{\lambda}f(par(u))\right)\\
			& = \lambda_u \lambda_{par(u)} f(par^2(u))\\
			\sum\limits_{u\in V}	S_{\lambda}^2f(u)&= \sum\limits_{v\in chi(u)} \lambda_v \sum\limits_{w\in chi(v)} \lambda_w   f(u).
		\end{align*}
		Hence, 
		\[\|S_{\lambda}^2f\|^2 = \sum\limits_{u\in V}\|S_{\lambda}^2e_u\|^2 |f(u)|^2.\]
		Thus, 
		\[\left(\sum\limits_{u\in V} \left|\sum\limits_{v\in chi(u)}\overline{\lambda_v}f(v)\right|^2\right)^2 \leq \left(\sum\limits_{u\in V}\|S_{\lambda}^2e_u\|^2 |f(u)|^2\right)\left(\sum\limits_{u\in V}|f(u)|^2\right).\]	
	\end{proof}

	\begin{theorem}
		If $S_{\lambda}$ is a densely defined $\ast$-paranormal operator, then there exists $c>0$ such that
		$\sum\limits_{u\in chi(v)} \dfrac{|\lambda_u|^2}{1+\|S_{\lambda}^2e_u\|^2} \leq c$, \ for every $v\in V$.
	\end{theorem}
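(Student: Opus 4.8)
The plan is to feed carefully chosen, finitely supported test vectors into the defining $\ast$-paranormal inequality $\|S_{\lambda}^*f\|^2 \le \|S_{\lambda}^2f\|\,\|f\|$ and let the tree structure collapse the resulting sums to a diagonal form. Fix a vertex $v \in V$. The decisive observation is that whenever $f$ is supported on the child set $chi(v)$, the coefficient $S_{\lambda}^*f(w) = \sum_{u\in chi(w)}\overline{\lambda_u}f(u)$ can be nonzero only if $chi(w)$ meets $chi(v)$; since every $u \in chi(v)$ has the unique parent $v$, the support of $S_{\lambda}^*f$ collapses onto the single vertex $v$, giving $\|S_{\lambda}^*f\|^2 = \bigl|\sum_{u\in chi(v)}\overline{\lambda_u}f(u)\bigr|^2$. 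For the other two quantities I would invoke the identities already recorded in Theorem~2.1, namely $\|f\|^2 = \sum_{u}|f(u)|^2$ and $\|S_{\lambda}^2f\|^2 = \sum_{u}\|S_{\lambda}^2e_u\|^2|f(u)|^2$, which for $f$ supported on $chi(v)$ reduce to sums over $chi(v)$.

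Next I would make the regularizing choice of coefficients. For a finite subset $F \subseteq chi(v)$ (restricted to those $u$ with $\|S_{\lambda}^2e_u\| < \infty$, so that the vector lies in $\mathcal{D}(S_{\lambda}^2)$), set
\[
f = \sum_{u\in F}\frac{\lambda_u}{1+\|S_{\lambda}^2e_u\|^2}\,e_u, \qquad \Sigma_F := \sum_{u\in F}\frac{|\lambda_u|^2}{1+\|S_{\lambda}^2e_u\|^2}.
\]
With this choice the three quantities become
\[
\|S_{\lambda}^*f\|^2 = \Sigma_F^2, \quad \|f\|^2 = \sum_{u\in F}\frac{|\lambda_u|^2}{(1+\|S_{\lambda}^2e_u\|^2)^2}, \quad \|S_{\lambda}^2f\|^2 = \sum_{u\in F}\frac{\|S_{\lambda}^2e_u\|^2\,|\lambda_u|^2}{(1+\|S_{\lambda}^2e_u\|^2)^2}.
\]
The elementary one-variable estimates $\tfrac{t}{(1+t)^2}\le \tfrac{1}{1+t}$ and $\tfrac{1}{(1+t)^2}\le \tfrac{1}{1+t}$ for $t\ge 0$ then show that both $\|f\|^2$ and $\|S_{\lambda}^2f\|^2$ are dominated by $\Sigma_F$.

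Substituting these bounds into the $\ast$-paranormal inequality gives $\Sigma_F^2 \le \|S_{\lambda}^2f\|\,\|f\| \le \Sigma_F^{1/2}\Sigma_F^{1/2} = \Sigma_F$, hence $\Sigma_F \le 1$. Since $F$ is an arbitrary finite subset of $chi(v)$, passing to the supremum yields $\sum_{u\in chi(v)}\tfrac{|\lambda_u|^2}{1+\|S_{\lambda}^2e_u\|^2} \le 1$, and as $v$ was arbitrary the constant $c=1$ works uniformly. I expect the only genuinely delicate point to be the domain bookkeeping in the densely defined (possibly unbounded) setting: one must check that the truncated test vector indeed lies in $\mathcal{D}(S_{\lambda}^2)\cap\mathcal{D}(S_{\lambda}^*)$ so that the $\ast$-paranormal inequality applies, and that children $u$ with $\|S_{\lambda}^2e_u\|=\infty$ may be discarded since they contribute nothing to the regularized sum. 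The finite-support truncation handles both concerns, after which the argument is just the clean algebra of the two scalar inequalities above, so the main creative step is really the choice of the weights $\lambda_u/(1+\|S_{\lambda}^2e_u\|^2)$.
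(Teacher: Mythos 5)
Your proposal is correct, but it reaches the conclusion by a genuinely different route than the paper. Both arguments hinge on exactly the same test vectors $f=\sum_{u\in W}\frac{\lambda_u}{1+\|S_{\lambda}^2e_u\|^2}\,e_u$ supported inside $chi(v)$, and on the same tree-structure observation that such an $f$ makes $S_{\lambda}^*f$ collapse onto the single vertex $v$. The difference is where the controlling inequality comes from. The paper never uses the pointwise $\ast$-paranormal inequality at all: it uses only the domain inclusion $D(S_{\lambda}^2)\subseteq D(S_{\lambda}^*)$ and the closed graph theorem applied to the identity embedding $(D(S_{\lambda}^2),\|\cdot\|_{S_{\lambda}^2})\to(D(S_{\lambda}^*),\|\cdot\|_{S_{\lambda}^*})$, which produces an unspecified constant $c$ with $\|f\|_{S_{\lambda}^*}^2\le c\|f\|_{S_{\lambda}^2}^2$; the test vectors then give $\Sigma_W^2\le c\,\Sigma_W$, hence $\Sigma_W\le c$. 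You instead feed the test vectors directly into $\|S_{\lambda}^*f\|^2\le\|S_{\lambda}^2f\|\,\|f\|$ and use the elementary bounds $\|f\|^2\le\Sigma_F$ and $\|S_{\lambda}^2f\|^2\le\Sigma_F$ to get $\Sigma_F^2\le\Sigma_F$, i.e. $\Sigma_F\le 1$. Your version buys two things the paper's does not: the explicit uniform constant $c=1$, and independence from the closed graph theorem --- the paper's embedding argument silently requires $(D(S_{\lambda}^2),\|\cdot\|_{S_{\lambda}^2})$ to be complete (i.e. $S_{\lambda}^2$ closed), which is never verified, and its passage from arbitrary $W\subset chi(u)$ to all of $chi(u)$ is looser than your finite-subset-plus-supremum bookkeeping. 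What the paper's route buys in exchange is generality: it needs only the qualitative inclusion $D(S_{\lambda}^2)\subseteq D(S_{\lambda}^*)$, so it applies verbatim to any densely defined operator with that property, with no quantitative hypothesis. One small point you should make explicit: $e_u\in D(S_{\lambda}^2)$ requires $e_u\in D(S_{\lambda})$ in addition to $\|S_{\lambda}^2e_u\|<\infty$; this is automatic here, because if some $e_u\notin D(S_{\lambda})$ then every $f\in D(S_{\lambda})$ would vanish at $u$ and $S_{\lambda}$ could not be densely defined, but the sentence belongs in the proof.
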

	\begin{proof}
		Since $S_{\lambda}$ is a densely defined $\ast$-paranormal operator, we have $D(S_{\lambda}^2)\subseteq D(S_{\lambda}^*)$. Using closed graph theorem to the identity embedding mapping $(D(S_{\lambda}^2),\|\cdot\|_{S_{\lambda}^2}) \to (D(S_{\lambda}^*),\|\cdot\|_{S_{\lambda}^*})$, we get a constant $c>0$ such that $\|f\|_{S_{\lambda}^*}^2 \leq c \|f\|_{S_{\lambda}^2}^2$. 
		That is,
		\[\sum\limits_{u\in V} \left|\sum\limits_{v\in chi(u)}\overline{\lambda_v}f(v)\right|^2 \leq c\sum\limits_{u\in V}(1+\|S_{\lambda}^2e_u\|^2)|f(u)|^2, \ \ f\in D(S_{\lambda}^2).\]
		Hence,
		\[\sum\limits_{u\in V} \left|\sum\limits_{v\in chi(u)}\overline{\lambda_v}f(v)\right|^2 \leq c\sum\limits_{u\in V} \sum\limits_{v\in chi(u)}(1+\|S_{\lambda}^2e_v\|^2)|f(v)|^2, \ \ f\in D(S_{\lambda}^2).\] 
		From the above inequality, we get
		\[\left|\sum\limits_{v\in chi(u)}\overline{\lambda_v}f(v)\right|^2 \leq c\sum\limits_{v\in chi(u)}(1+\|S_{\lambda}^2e_v\|^2)|f(v)|^2, \ \ f\in D(S_{\lambda}^2), \ \ u \in V.\] 
		Let $u \in V$ and $W \subset \{chi(u)\}$ . 
		
		Define $$f(v)=\begin{cases}
			\dfrac{\lambda_v}{1+\|S_{\lambda}^2e_v\|^2} & \text{if } v \in W, \\
			0 & \text{otherwise}
		\end{cases}$$.
		
		Then,
		\[\left(\sum\limits_{v\in W}\dfrac{|\lambda_v|^2}{1+\|S_{\lambda}^2e_v\|^2}\right)^2 \leq c\sum\limits_{v\in W}\dfrac{|\lambda_v|^2}{1+\|S_{\lambda}^2e_v\|^2}.\] Hence,
		\[\sum\limits_{v\in W}\dfrac{|\lambda_v|^2}{1+\|S_{\lambda}^2e_v\|^2} \leq c.\]
		Since $u$ is arbitrary and letting $W$ to $\{chi(u)\}$, we have 
		\[\sum\limits_{v\in chi(u)}\dfrac{|\lambda_v|^2}{1+\|S_{\lambda}^2e_v\|^2} \leq c\ \ u \in V.\]
	\end{proof}
	Now we comment on norm attaining weighted shifts on directed trees.
	\begin{theorem}\label{grapg char}
		Let $S_{\lambda}$ be a bounded weighted shift on $\ell^2(V)$ with weights $(\lambda_v)_{v\in V^0}$ where $V^0=V\setminus root(G)$. Then $S_{\lambda}$ is norm attaining if and only if there exists $v \in V$ such that \newline $\sum\limits_{u\in chi(v)}|\lambda_u|^2=\|S_{\lambda}\|^2$.
	\end{theorem}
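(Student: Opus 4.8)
The plan is to reduce the whole question to the diagonal quantities $\|S_{\lambda}e_u\|^2=\sum_{w\in chi(u)}|\lambda_w|^2$, exploiting the fact that in a directed tree each vertex has a unique parent, so the children sets $\{chi(u)\}_{u\in V}$ are pairwise disjoint. First I would record the action of $S_{\lambda}$ on the orthonormal basis: directly from the definition, $S_{\lambda}e_u=\sum_{w\in chi(u)}\lambda_w e_w$, whence $\|S_{\lambda}e_u\|^2=\sum_{w\in chi(u)}|\lambda_w|^2$.

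For an arbitrary $f=\sum_{u}f(u)e_u\in\ell^2(V)$ I would then write $S_{\lambda}f=\sum_u f(u)\sum_{w\in chi(u)}\lambda_w e_w$ and use disjointness of the children sets (each vertex lies in exactly one $chi(u)$, namely for $u=par$ of it) to see that these contributions are mutually orthogonal. This gives the key identity $\|S_{\lambda}f\|^2=\sum_u\|S_{\lambda}e_u\|^2\,|f(u)|^2$, which exhibits $\|S_{\lambda}f\|^2/\|f\|^2$ as a weighted average of the numbers $\|S_{\lambda}e_u\|^2$ with weights $|f(u)|^2$. Taking the supremum over $f$ then yields $\|S_{\lambda}\|^2=\sup_{u\in V}\|S_{\lambda}e_u\|^2$.

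With this identity the characterization becomes a statement about when a supremum is actually attained. For sufficiency, if some $v$ satisfies $\sum_{u\in chi(v)}|\lambda_u|^2=\|S_{\lambda}\|^2$, then the unit vector $e_v$ attains the norm, so $S_{\lambda}$ is norm attaining. For necessity, suppose $\|S_{\lambda}f\|=\|S_{\lambda}\|\,\|f\|$ for some $f\neq 0$. Writing $M=\|S_{\lambda}\|^2$, I would rearrange the identity into $\sum_u\bigl(M-\|S_{\lambda}e_u\|^2\bigr)|f(u)|^2=0$; since every summand is nonnegative and $M-\|S_{\lambda}e_u\|^2\geq 0$ for all $u$, it follows that $f(u)=0$ whenever $\|S_{\lambda}e_u\|^2<M$. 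As $f$ is nonzero there is at least one $v$ with $f(v)\neq 0$, and for that $v$ we must have $\|S_{\lambda}e_v\|^2=M$, i.e. $\sum_{u\in chi(v)}|\lambda_u|^2=\|S_{\lambda}\|^2$.

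The only real subtlety is the passage from $\|S_{\lambda}\|^2$ being the \emph{supremum} of the $\|S_{\lambda}e_u\|^2$ to whether that supremum is a \emph{maximum}; this gap between $\sup$ and $\max$ over the index set $V$ is exactly where the norm-attaining hypothesis does its work, and it is the heart of the argument. The orthogonality step, although routine, is what makes the reduction possible and relies essentially on the tree structure (a unique parent for each vertex) rather than that of a general directed graph.
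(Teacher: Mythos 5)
Your proof is correct, and it turns on the same pivot as the paper's: the formula $\|S_{\lambda}\|^2=\sup_{u\in V}\|S_{\lambda}e_u\|^2$, where $\|S_{\lambda}e_u\|^2=\sum_{w\in chi(u)}|\lambda_w|^2$. The difference is one of completeness. The paper's entire proof consists of citing this norm formula to Jab\l o\'nski--Jung--Stochel and declaring the proof finished; it never addresses the only nontrivial implication, namely why norm attainment at an arbitrary $f\in\ell^2(V)$ forces attainment at some basis vector $e_v$ (for a general bounded operator this is false: attaining the norm at some vector says nothing about attaining it at a member of a prescribed orthonormal basis). Your writeup supplies exactly the two ingredients needed to close this: the mutual orthogonality of the vectors $S_{\lambda}e_u$ (the sets $chi(u)$ are pairwise disjoint because each vertex has a unique parent), which yields the Parseval-type identity $\|S_{\lambda}f\|^2=\sum_{u\in V}\|S_{\lambda}e_u\|^2\,|f(u)|^2$, and the vanishing-summands argument: writing $M=\|S_{\lambda}\|^2$, norm attainment gives $\sum_{u\in V}\bigl(M-\|S_{\lambda}e_u\|^2\bigr)|f(u)|^2=0$ with every summand nonnegative, so $\|S_{\lambda}e_v\|^2=M$ at any $v$ in the support of $f$. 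This identity is consistent with the paper's own toolkit (it uses the analogous identity for $S_{\lambda}^2$ in its first theorem), so your route is the same in spirit; it simply carries out rigorously what the paper's one-line proof leaves entirely to the reader, and in doing so makes explicit that the tree structure, not just boundedness, is what the argument uses.
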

	\begin{proof}
		Since $S_{\lambda} \in \mathcal{B}(\ell^2(V))$, we have $\|S_{\lambda}\|^2=\sup\limits_{u\in V}\|S_{\lambda}e_u\|^2$ \cite{graph}. This completes the proof.
	\end{proof}
	\begin{corollary}
		Let $S_{\lambda} \in \mathcal{B}(\ell^2(V))$ be norm attaining operator such that the sequence of weights $(\lambda_v)_{v\in V^0}$ has a limit $\lambda$. Then either $\lambda$ is a limit of a decreasing sequence or there exists a $\mu\in (\lambda_v)$ such that $\mu > \lambda$.
	\end{corollary}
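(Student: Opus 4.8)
The plan is to deduce the dichotomy directly from the norm characterization in Theorem~\ref{grapg char}, together with elementary facts about convergent real sequences. First I would record the two consequences of the hypotheses that drive the argument: on the one hand, norm attainment gives, via Theorem~\ref{grapg char} and the identity $\|S_{\lambda}\|^2=\sup_{u\in V}\sum_{v\in chi(u)}|\lambda_v|^2$, a vertex $v_0$ at which the supremum of the children-sums is actually achieved; on the other hand, the assumption $\lambda_v\to\lambda$ forces all but finitely many of the weights to lie in any prescribed neighbourhood of $\lambda$, so in particular only finitely many weights can exceed $\lambda+\varepsilon$ for any $\varepsilon>0$. Since each individual weight satisfies $|\lambda_v|\le\|S_{\lambda}\|$, the sequence is bounded and $\lambda\le\|S_{\lambda}\|$.

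Next I would split into the two cases appearing in the statement. If there is even one weight $\mu=\lambda_{v_1}$ with $\mu>\lambda$, the second alternative holds and we are finished; by the convergence $\lambda_v\to\lambda$ there are only finitely many such weights, but a single one already suffices. So the substance is the complementary case in which $\lambda_v\le\lambda$ for every $v\in V^0$. Here $\lambda$ is simultaneously the limit and the supremum of the weight sequence, and the goal is to exhibit a decreasing subsequence of $(\lambda_v)$ converging to $\lambda$. I would extract a monotone subsequence by the monotone-subsequence theorem, note that it converges to the common value $\lambda$, and then argue that it may be taken non-increasing: since the bound $\lambda_v\le\lambda$ prevents any approach to $\lambda$ strictly from above, such a decreasing subsequence must be built from weights equal to $\lambda$, and this is exactly where norm attainment enters, guaranteeing that the top value $\lambda$ is genuinely realized at the maximizing vertex $v_0$ rather than merely approached.

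The main obstacle is precisely this last step. The constraint $\lambda_v\le\lambda$ is compatible with the weights increasing to $\lambda$, a configuration that, taken in isolation, admits no non-increasing subsequence with limit $\lambda$; the role of the norm-attaining hypothesis is to rule this out by forcing the value $\lambda$ to be attained, and, through the maximizing children-sum at $v_0$, attained on a set large enough to feed a genuinely decreasing selection. I would therefore spend most of the effort converting the attainment of $\sup_{u\in V}\sum_{v\in chi(u)}|\lambda_v|^2$ at $v_0$ into the statement that $\lambda$ actually occurs among the weights, after which the decreasing subsequence is produced by the usual inductive choice of successive weights tending down to $\lambda$. Everything else is routine bookkeeping with convergent sequences.
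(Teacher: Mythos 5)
Your reduction to the case $\lambda_v\le\lambda$ for all $v$ is fine, but the step you yourself flag as the crux --- converting norm attainment into the statement that the value $\lambda$ actually occurs among the weights --- is a genuine gap, and it cannot be closed. By Theorem~\ref{grapg char}, norm attainment of $S_{\lambda}$ means attainment of $\sup_{u\in V}\sum_{v\in chi(u)}|\lambda_v|^2$, which is a statement about sums over \emph{children}, not about individual weights: the maximizing vertex may achieve the norm because it has several children with modest weights, while every single weight stays strictly below $\lambda$. Concretely, take the tree whose root $u_0$ has two children $a_1,b_1$, each starting an infinite path $a_1\to a_2\to\cdots$ and $b_1\to b_2\to\cdots$, with weights $\lambda_{a_1}=\lambda_{b_1}=\tfrac{9}{10}$ and $\lambda_{a_i}=\lambda_{b_i}=\tfrac{6}{5}-\tfrac{1}{i}$ for $i\ge 2$. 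Then
\begin{equation*}
\|S_{\lambda}e_{u_0}\|^2=2\left(\tfrac{9}{10}\right)^2=\tfrac{81}{50},\qquad
\|S_{\lambda}e_{a_i}\|^2=\|S_{\lambda}e_{b_i}\|^2=\left(\tfrac{6}{5}-\tfrac{1}{i+1}\right)^2<\tfrac{36}{25}<\tfrac{81}{50},
\end{equation*}
so $\|S_{\lambda}\|^2=\tfrac{81}{50}$ is attained at $e_{u_0}$ and $S_{\lambda}$ is norm attaining, the weights converge to $\lambda=\tfrac{6}{5}$, yet no weight equals or exceeds $\tfrac{6}{5}$. Thus norm attainment does not realize $\lambda$ among the weights, which is exactly what your case 2 needs.

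There is a second, independent problem: even if one weight did equal $\lambda$, that is not enough, since with all weights $\le\lambda$ a decreasing (even merely non-increasing) subsequence of the weight sequence converging to $\lambda$ must have all its terms equal to $\lambda$, so you would need infinitely many vertices of weight exactly $\lambda$, not one realization near the maximizing vertex. Note that the paper itself offers no proof of this corollary (it is stated as an immediate consequence of Theorem~\ref{grapg char}), and the example above shows that, read literally for a general directed tree --- with ``decreasing sequence'' meaning a decreasing sequence of weights, the only reading under which the statement is not vacuous --- both alternatives of the dichotomy can fail simultaneously. Your proposed route is sound precisely in the situations where $\|S_{\lambda}e_v\|$ reduces to a single weight, e.g.\ when every vertex has exactly one child: there norm attainment plus $\lambda_v\le\lambda$ for all $v$ does force $\sup_v\lambda_v=\lambda$ to be attained, which is the argument you sketch; but the obstacle you identified is intrinsic to the general tree setting, not a missing lemma.
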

	
	Consider the directed graph $G=(V,E)$ with weights $(\lambda_{ij})_{j=1}^\infty$, $i=1,2, \cdots,n$.
	
	\begin{figure}[h!]
		\begin{tikzpicture}[line cap=round,line join=round,>=triangle 45,x=.9cm,y=.9cm]
			\clip(-9.223030663277001,-2.540433489146846) rectangle (06.28333942857835,3.58275472164352);
			\draw [->,line width=0.4pt] (-5,0)-- (-3,3);
			\draw [->,line width=0.4pt] (-3,3)-- (-1,3);
			\draw [->,line width=0.4pt] (-1,3)-- (1,3);
			
			\draw [->,line width=0.4pt] (-5,0)-- (-3,1.5);
			\draw [->,line width=0.4pt] (-3,1.5)-- (-1,1.5);
			\draw [->,line width=0.4pt] (-1,1.5)-- (1,1.5);
			
			\draw [->,line width=0.4pt] (-5,0)-- (-3,0);
			\draw [->,line width=0.4pt] (-3,0)-- (-1,0);
			\draw [->,line width=0.4pt] (-1,0)-- (1,0);
			
			\draw [->,line width=0.4pt] (-5,0)-- (-3,-2);
			\draw [->,line width=0.4pt] (-3,-2)-- (-1,-2);
			\draw [->,line width=0.4pt] (-1,-2)-- (1,-2);
			\begin{scriptsize}
				\draw [fill=black] (-5,0) circle (2.5pt);
				\draw[color=black] (-5.401442887607267,0.2530088013793716) node {$u_0$};
				\draw [fill=black] (-3,3) circle (2.5pt);
				\draw[color=black] (-2.925823133557061,3.4363474094839224) node {$u_{11}$};
				\draw[color=black] (-2.925823133557061,2.563474094839224) node {$\lambda_{11}$};
				\draw [fill=black] (-1,3) circle (2.5pt);
				\draw[color=black] (-0.9789765308573843,3.4363474094839224) node {$u_{12}$};
				\draw[color=black] (-0.9789765308573843,2.563474094839224) node {$\lambda_{12}$};
				\draw [fill=black] (1,3) circle (2.5pt);
				\draw[color=black] (0.967870071842292,3.4363474094839224) node {$u_{13}$};
				\draw[color=black] (0.967870071842292,2.563474094839224) node {$\lambda_{13}$};
				\draw [fill=black] (1.9,3) circle (.8pt);
				\draw [fill=black] (2.3,3) circle (.8pt);
				\draw [fill=black] (2.7,3) circle (.8pt);
				
				\draw [fill=black] (1.9,1.5) circle (.8pt);
				\draw [fill=black] (2.3,1.5) circle (.8pt);
				\draw [fill=black] (2.7,1.5) circle (.8pt);
				
				\draw [fill=black] (-3,-2) circle (2.5pt);
				\draw[color=black] (-2.925823133557061,-1.529451225645034) node {$u_{n1}$};
				\draw[color=black] (-2.925823133557061,-2.39451225645034) node {$\lambda_{n1}$};
				\draw [fill=black] (-1,-2) circle (2.5pt);
				\draw[color=black] (-0.9068711011277666,-1.529451225645034) node {$u_{n2}$};
				\draw[color=black] (-0.9068711011277666,-2.39451225645034) node {$\lambda_{n2}$};
				\draw [fill=black] (1,-2) circle (2.5pt);
				\draw[color=black] (0.8476943556262626,-1.529451225645034) node {$u_{n3}$};
				\draw[color=black] (0.8476943556262626,-2.39451225645034) node {$\lambda_{n3}$}; 	
				\draw [fill=black] (1.9,-2) circle (.8pt);
				\draw [fill=black] (2.3,-2) circle (.8pt);
				\draw [fill=black] (2.7,-2) circle (.8pt);
				
				\draw [fill=black] (-3,0) circle (2.5pt);
				\draw[color=black] (-2.925823133557061,.4) node {$u_{31}$};
				\draw[color=black] (-2.925823133557061,-.4) node {$\lambda_{31}$};
				\draw [fill=black] (-1,0) circle (2.5pt);
				\draw[color=black] (-0.9789765308573843,.4) node {$u_{32}$};
				\draw[color=black] (-0.9789765308573843,-.4) node {$\lambda_{32}$};
				\draw [fill=black] (1,0) circle (2.5pt);
				\draw[color=black] (0.967870071842292,.4) node {$u_{33}$};
				\draw[color=black] (0.967870071842292,-.4) node {$\lambda_{33}$};
				\draw [fill=black] (1.9,0) circle (.8pt);
				\draw [fill=black] (2.3,0) circle (.8pt);
				\draw [fill=black] (2.7,0) circle (.8pt);
				
				\draw [fill=black] (-3,1.5) circle (2.5pt);
				\draw[color=black] (-2.925823133557061,1.9) node {$u_{21}$};
				\draw[color=black] (-2.925823133557061,1.1) node {$\lambda_{21}$};
				\draw [fill=black] (-1,1.5) circle (2.5pt);
				\draw[color=black] (-0.9789765308573843,1.9) node {$u_{22}$};
				\draw[color=black] (-0.9789765308573843,1.1) node {$\lambda_{22}$};
				\draw [fill=black] (1,1.5) circle (2.5pt);
				\draw[color=black] (0.967870071842292,1.9) node {$u_{23}$};
				\draw[color=black] (0.967870071842292,1.1) node {$\lambda_{23}$};
				
				\draw [fill=black] (-3,-1) circle (.8pt);
				\draw [fill=black] (-3,-1.2) circle (.8pt);
				\draw [fill=black] (-3,-.8) circle (.8pt);
			\end{scriptsize}
		\end{tikzpicture}
		\caption{Directed graph with n branches}\label{branch}
	\end{figure}
	Let $f \in \ell^2(V)$, then $f=  \alpha_0 e_0 + \sum\limits_{\substack{j=1 \\ 1\leq i \leq n}}^\infty \alpha_{ij}e_{ij}$ where $\alpha_0, \alpha_{ij}\in \mathbb{C}$. 
	
	Now,
	\begin{align*}
		S_{\lambda}f &= \alpha_0 \left(\sum\limits_{i=1}^n \lambda_{i1} e_{i1}\right) + \sum\limits_{\substack{j=1 \\ 1\leq i \leq n}}^\infty \alpha_{ij}\lambda_{i,j+1} e_{i,j+1}.\\
		\|S_{\lambda}f\|^2 &=|\alpha_0|^2 \left(\sum\limits_{i=1}^n |\lambda_{i1}|^2 \right) + \sum\limits_{\substack{j=1 \\ 1\leq i \leq n}}^\infty |\alpha_{ij}|^2|\lambda_{i,j+1}|^2. 
	\end{align*}
	
	Using Theorem~\ref{grapg char}, there exists some $e_v$ where $v \in V$ such that $\|S_{\lambda}\|^2 = \|S_{\lambda}e_v\|^2$. Hence, 
	\[\|S_{\lambda}\|^2= \max\left\{ \sum\limits_{i=1}^n |\lambda_{i1}|^2, |\lambda_{ij}|^2;1 \leq i \leq n,j>1\right\}.\]
	In particular, consider the directed graph $G=(V,E)$ with weights $(\lambda_{ij})_{j=1}^\infty$ and $n=3$.
	
	Let $f \in \ell^2(V)$. Then $f= \alpha_0 e_0 + \sum\limits_{j=1}^\infty \alpha_{1j}e_{1j} + \sum\limits_{j=1}^\infty \alpha_{2j}e_{2j} +\sum\limits_{j=1}^\infty \alpha_{3j}e_{3j}$ where $\alpha_0, \alpha_{ij}$'s $\in \mathbb{C}, e_0= e_{u_0}$ and $e_{ij}=e_{u_{ij}}$. Hence $\|f\|^2= |\alpha_0|^2 +\sum\limits_{j=1}^\infty |\alpha_{1j}|^2+\sum\limits_{j=1}^\infty |\alpha_{2j}|^2+\sum\limits_{j=1}^\infty |\alpha_{3j}|^2$.
	
	Now,
	\begin{align*}
		S_{\lambda}f
		&=\alpha_0 S_{\lambda}(e_0) + \sum\limits_{j=1}^\infty \alpha_{1j}S_{\lambda}(e_{1j}) + \sum\limits_{j=1}^\infty \alpha_{2j}S_{\lambda}(e_{2j}) + \sum\limits_{j=1}^\infty \alpha_{3j}S_{\lambda}(e_{3j}) \\
		& = \alpha_0(\lambda_{11}e_{11}+\lambda_{21}e_{21}+\lambda_{31}e_{31})+ \sum\limits_{j=1}^\infty \alpha_{1j}\lambda_{1,j+1}e_{1,j+1} + \sum\limits_{j=1}^\infty \alpha_{2j}\lambda_{2,j+1}e_{2,j+1}  +\\& \ \ \  \sum\limits_{j=1}^\infty \alpha_{3j}\lambda_{3,j+1}e_{3,j+1}. 
	\end{align*}
	Therefore, 
	\[\|S_{\lambda}f\|^2=|\alpha_0|^2(|\lambda_{11}|^2+|\lambda_{21}|^2+|\lambda_{31}|^2) +\sum\limits_{j=1}^\infty |\alpha_{1j}|^2|\lambda_{1,j+1}|^2+\sum\limits_{j=1}^\infty |\alpha_{2j}|^2|\lambda_{2,j+1}|^2 +\sum\limits_{j=1}^\infty |\alpha_{3j}|^2|\lambda_{3,j+1}|^2.\]
	Now, $S_{\lambda}$ is norm attaining if and only if there exists an $f\in \ell^2(V)$ with $\|f\|^2= |\alpha_0|^2 +\sum\limits_{j=1}^\infty |\alpha_{1j}|^2+\sum\limits_{j=1}^\infty |\alpha_{2j}|^2 +\sum\limits_{j=1}^\infty |\alpha_{3j}|^2 =1$ such that \[\|S_{\lambda}f\|^2= \|S_{\lambda}\|^2= \sup\limits_{g\in \ell^2(V)}\dfrac{\|S_{\lambda}g\|^2}{\|g\|^2}.\]
	By Theorem~\ref{grapg char}, there exists some $e_v$ where $v \in V$ such that $\|S_{\lambda}\|^2 = \|S_{\lambda}e_v\|^2$. Thus, 
	\[\|S_{\lambda}\|^2= \max\{ |\lambda_{11}|^2+|\lambda_{21}|^2  +|\lambda_{31}|^2, |\lambda_{ij}|,i=1,2,3;j>1\}\]
	Now consider the root free directed graph with a cycle with $m$ vertices, say $\{v_1.v_2, \cdots, v_m\}$ such that $par(v_1)=v_{m}$ and $par(v_{i+1})=v_i$ for $2 \leq i \leq m$. Also fix $n_i$ branches at each vertex $v_i$ as in Figure~\ref{branch}.
	 Now we have $V=\{v_1, v_2, \cdots, v_m\}\cup \cup_{s=1}^m \cup_{i=1}^{n_s} \cup_{j=1}^\infty \{v_{i,j}^s\}$ and $\{e_n\}_{n=1}^m\cup \{e_{i,j}^s\}$ be the basis for $\ell^2(V)$. Then for $f \in \ell^2(V)$ be defined as,
	 $f= \sum\limits_{s=1}^m \alpha_s e_s + \sum\limits_{i=1}^{n_s} \sum\limits_{j=1}^{\infty}\alpha_{i,j}^s e_{i,j}^s.$
	 Therefore, 
	 \[S_\lambda f = \sum\limits_{s=1}^{m-1} \alpha_s \left(\lambda_{s+1}e_{s+1}+\sum\limits_{i=1}^{n_s}\lambda_{i,1}^se_{i,1}^s\right)+\alpha_m\left(\lambda_{1}e_{1}+\sum\limits_{i=1}^{n_s}\lambda_{i,1}^me_{i,1}^m\right)+ \sum\limits_{s=1}^{m}\sum\limits_{\substack{i=1\\1 \leq j \leq \infty}}^{n_s}  \alpha_{i,j}^s\lambda_{i,j+1}^se_{i,j+1}^s\]
	 and 
	 \[\|S_\lambda f\|^2 =\sum\limits_{s=1}^{m-1} |\alpha_s|^2 \left(|\lambda_{s+1}|^2+\sum\limits_{i=1}^{n_s}|\lambda_{i,1}^s|^2\right)+|\alpha_m|^2\left(|\lambda_{1}|^2+\sum\limits_{i=1}^{n_s}|\lambda_{i,1}^m|^2\right)+ \sum\limits_{s=1}^{m}\sum\limits_{\substack{i=1\\1 \leq j \leq \infty}}^{n_s}  |\alpha_{i,j}^s|^2|\lambda_{i,j+1}^s|^2.\]
	 Using Theorem~\ref{grapg char}, we have
	 \[\|S_\lambda\|^2= \max \left\{|\lambda_{s+1}|^2+\sum\limits_{i=1}^{n_s}|\lambda_{i,1}^s|^2,|\lambda_{1}|^2+\sum\limits_{i=1}^{n_s}|\lambda_{i,1}^m|^2,|\lambda_{i,j+1}^{s+1}|^2: 1 \leq s<m, 1\leq i \leq n_s, 1\leq j \leq \infty\right\}.\]
	 \begin{example}
	 	Consider a directed graph with $m=2, n_1=3$ and $n_2=0$.\\ Let $\lambda=\begin{cases}
	 		2 & \text{ if } \lambda=\lambda_{1}, \lambda_{2}\\
	 		1 & \text{ otherwise} 
	 	\end{cases}.$ Then $\|S_\lambda\|=\sqrt{7}=\|S_\lambda(e_1)\|$. Hence
	 	$S_\lambda$ is norm attaining.
	 \end{example}
	\begin{theorem}
		Let $S_{\lambda}$ be a weighted shift on $\ell^2(V)$ with weights $(\lambda_v)_{v\in V^0}$ where $V^0=V\setminus root(G)$. Then $S_{\lambda}$ is $\ast$-paranormal if and only if 
		\[\| S_{\lambda}^2e_v\|^2-2k \|S_{\lambda}^*e_v\|^2+k^2 \geq 0 \]
		for all $k>0, v\in V$.
	\end{theorem}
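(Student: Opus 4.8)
The plan is to read the $\ast$-paranormal condition $\|S_{\lambda}^*f\|^2 \le \|S_{\lambda}^2 f\|\,\|f\|$ ($f\in \ell^2(V)$) as a statement about the two quantities that already appear in Theorem~2.1, and to recognise the displayed inequality as a per-vertex (discriminant) reformulation of the special case $f=e_v$. Concretely, write $a_v=\|S_{\lambda}^*e_v\|^2$ and $b_v=\|S_{\lambda}^2e_v\|^2$; since $\|e_v\|=1$, the $\ast$-paranormal inequality evaluated at $f=e_v$ reads $a_v\le\sqrt{b_v}$, that is $a_v^2\le b_v$. So the argument splits into two parts: first, the elementary equivalence between $a_v^2\le b_v$ and the quadratic inequality $b_v-2k a_v+k^2\ge 0$ for all $k>0$; second, the reduction of $\ast$-paranormality (a statement for all $f$) to these basis inequalities.

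For the first part I would argue purely with the quadratic $h(k)=k^2-2a_v k+b_v$, which has positive leading coefficient and discriminant $4(a_v^2-b_v)$. If $a_v^2\le b_v$ then the discriminant is nonpositive, so $h(k)\ge 0$ for every real $k$, in particular for $k>0$; this gives the forward direction once the $\ast$-paranormal inequality at $e_v$ has supplied $a_v^2\le b_v$. Conversely, the vertex of $h$ sits at $k=a_v\ge 0$: if $a_v>0$ the minimum of $h$ over $k>0$ is the vertex value $b_v-a_v^2$, and requiring $h\ge 0$ there forces $a_v^2\le b_v$; if $a_v=0$ the inequality $a_v^2\le b_v$ is automatic. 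Hence $h(k)\ge 0$ for all $k>0$ is equivalent to $a_v^2\le b_v$, i.e. to $\|S_{\lambda}^*e_v\|^2\le \|S_{\lambda}^2e_v\|\,\|e_v\|$, for each $v\in V$.

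The substantive step is the second one, the reduction to basis vectors in the sufficiency direction; necessity is immediate, since $\ast$-paranormality for all $f$ includes $f=e_v$. For the converse I would return to the functional form in Theorem~2.1 and try to deduce the inequality for a general $f=\sum_v f(v)e_v$ from the per-vertex inequalities $a_v^2\le b_v$. One side is favourable, because the computation behind Theorem~2.1 shows $\|S_{\lambda}^2 f\|^2=\sum_v b_v\,|f(v)|^2$ is genuinely diagonal in the orthonormal basis $\{e_v\}$ (distinct vertices have disjoint sets of grandchildren in a tree). The main obstacle is the other side: $\|S_{\lambda}^*f\|^2=\sum_u \bigl|\sum_{v\in chi(u)}\overline{\lambda_v}f(v)\bigr|^2$ couples the values of $f$ over each sibling class $chi(u)$, so passing from the diagonal inequalities to the full inequality is not a termwise comparison. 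I expect this to be the delicate part of the argument, to be approached by applying Cauchy--Schwarz within each sibling class $chi(u)$ and then summing over $u$; controlling the cross terms produced by siblings sharing a common parent is exactly where the care is needed.
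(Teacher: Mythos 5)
Your first two paragraphs (evaluating the $\ast$-paranormal inequality at $f=e_v$ and the discriminant equivalence between $\|S_{\lambda}^*e_v\|^2\le\|S_{\lambda}^2e_v\|\,\|e_v\|$ and the quadratic inequality in $k$) reproduce, in a cleaner form, everything the paper's own proof contains: the paper simply sets $f=e_v$ and deduces the displayed inequality, i.e.\ it proves only the ``only if'' half. Despite the theorem being stated as an equivalence, the printed proof never addresses the converse, so the step you single out as ``the delicate part'' is absent from the paper entirely.

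Moreover, your caution about that step is justified in the strongest sense: the sufficiency direction is false, so no handling of the cross terms can complete your plan. Consider the directed tree with root $u_0$, two children $v_1,v_2$ of $u_0$, and below each $v_i$ an infinite non-branching chain, all weights equal to $1$. Every basis vector satisfies the displayed inequality: at the root $\|S_{\lambda}^*e_{u_0}\|=0$, so the quadratic is $\|S_{\lambda}^2e_{u_0}\|^2+k^2\ge 0$, and at every other vertex $\|S_{\lambda}^2e_v\|=\|S_{\lambda}^*e_v\|=1$, so the quadratic is $(1-k)^2\ge 0$. Yet for $f=e_{v_1}+e_{v_2}$ one gets $S_{\lambda}^*f=2e_{u_0}$, hence $\|S_{\lambda}^*f\|^2=4$, while $\|S_{\lambda}^2f\|\,\|f\|=\sqrt{2}\cdot\sqrt{2}=2$; so $S_{\lambda}$ is not $\ast$-paranormal. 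This is precisely the sibling coupling you identified: the vector $f$ aligned with the weights over a sibling class $chi(u)$ attains equality in Cauchy--Schwarz, and the per-vertex bounds cannot absorb the resulting factor. Indeed, your Cauchy--Schwarz strategy succeeds exactly when no vertex has more than one child, for then $\|S_{\lambda}^*f\|^2=\sum_v\|S_{\lambda}^*e_v\|^2|f(v)|^2$ is diagonal and, writing $a_v=\|S_{\lambda}^*e_v\|^2$, $b_v=\|S_{\lambda}^2e_v\|^2$, the per-vertex bounds $a_v^2\le b_v$ give $\bigl(\sum_v a_v|f(v)|^2\bigr)^2\le\bigl(\sum_v a_v^2|f(v)|^2\bigr)\|f\|^2\le\|S_{\lambda}^2f\|^2\|f\|^2$; with branching this breaks down. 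So the honest verdict is: your proof of necessity is complete and coincides with the paper's argument, and the missing sufficiency half is a defect of the theorem as stated (and of the paper's proof, which silently omits it), not a fixable gap in your approach.
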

	\begin{proof}
		Since $S_{\lambda}$ is $\ast$-paranormal,
		\[\|S_{\lambda}^*f\|^2 \leq \|S_{\lambda}^2f\|\|f\| \mbox{ for all }f \in \ell^2(V).\] Let $f=e_v$, then
		$$\|S_{\lambda}^*e_v\|^2 \leq \|S_{\lambda}^2e_v\|\|e_v\| $$
		That is, $$(2\|S_{\lambda}^*e_v\|)^2 \leq 4\|S_{\lambda}^2e_v\|\|e_v\|.$$ Hence,
		\[\| S_{\lambda}^2e_v\|^2-2k \|S_{\lambda}^*e_v\|^2+k^2 \geq 0 \]
		for all $k>0, v\in V$.
	\end{proof}
	\begin{corollary}\label{gr*para}
		Let $S_{\lambda}$ be a $\ast$-paranormal weighted shift on $\ell^2(V)$ with weights $(\lambda_v)_{v\in V^0}$ such that $S_{\lambda}^*$ is norm attaining. Then there exists a $v \in V$ such that $\|S_{\lambda}^2e_v\| = \|S_{\lambda}\|^2$.
	\end{corollary}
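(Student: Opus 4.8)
The plan is to exploit the defining inequality of $\ast$-paranormality at a maximising vector for $S_\lambda^*$ and then squeeze all the relevant norms together. First I would use that $S_\lambda^*$ is norm attaining to fix a unit vector $x \in \ell^2(V)$ with $\|S_\lambda^* x\| = \|S_\lambda^*\| = \|S_\lambda\|$, the last equality because an operator and its adjoint share the same norm.

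Next, applying the $\ast$-paranormal inequality $\|S_\lambda^* f\|^2 \le \|S_\lambda^2 f\|\,\|f\|$ with $f = x$ gives $\|S_\lambda\|^2 = \|S_\lambda^* x\|^2 \le \|S_\lambda^2 x\|$. On the other hand, submultiplicativity of the operator norm yields $\|S_\lambda^2 x\| \le \|S_\lambda^2\| \le \|S_\lambda\|^2$. Chaining these forces every inequality to be an equality, so that $\|S_\lambda^2 x\| = \|S_\lambda^2\| = \|S_\lambda\|^2$; in particular the supremum defining $\|S_\lambda^2\|$ is attained at $x$.

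It remains to transfer this from the vector $x$ to a single basis vector $e_v$. Here I would invoke the identity established in the first theorem, namely $\|S_\lambda^2 f\|^2 = \sum_{u \in V} \|S_\lambda^2 e_u\|^2\,|f(u)|^2$, valid because the supports of the vectors $S_\lambda^2 e_u$ for distinct $u$ are disjoint (each vertex of a tree has a unique grandparent, so $S_\lambda^2 e_u$ is supported on $chi^{(2)}(u)$). Taking $f = e_u$ shows $\|S_\lambda^2 e_u\|^2 \le \|S_\lambda^2\|^2$, while the identity gives the reverse bound for the supremum, so $\|S_\lambda^2\|^2 = \sup_{u \in V}\|S_\lambda^2 e_u\|^2 = \|S_\lambda\|^4$. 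Writing $c_u = \|S_\lambda^2 e_u\|^2$ and $C = \sup_u c_u$, the equality $\|S_\lambda^2 x\|^2 = C$ reads $\sum_{u} |x(u)|^2 c_u = C$ with $\sum_u |x(u)|^2 = 1$.

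The decisive step is the elementary observation that a convex combination of numbers all bounded above by their supremum can equal that supremum only if every term carrying positive weight already achieves it: since $x \ne 0$, choosing any $v$ with $x(v) \ne 0$ forces $c_v = C$, that is $\|S_\lambda^2 e_v\| = \|S_\lambda\|^2$, which is exactly the claim. I expect the only point requiring care to be the justification that $\|S_\lambda^2\|^2$ equals the supremum of the $\|S_\lambda^2 e_u\|^2$ and that the maximiser $x$ is supported on indices realising that supremum; the norm-squeezing that precedes it is routine.
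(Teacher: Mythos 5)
Your proof is correct, but it follows a genuinely different (and more careful) route than the paper's. The paper deduces the corollary from its quadratic characterization of $\ast$-paranormality, $\|S_{\lambda}^2e_v\|^2-2k\|S_{\lambda}^*e_v\|^2+k^2\geq 0$ for all $k>0$, applied directly to a basis vector: it asserts that norm attainment of $S_{\lambda}^*$ produces a vertex $v$ with $\|S_{\lambda}^*e_v\|=\|S_{\lambda}\|$, then sets $k=\|S_{\lambda}\|^2$ to get $\|S_{\lambda}^2e_v\|^2\geq\|S_{\lambda}\|^4$. That first step is actually a jump: since $\|S_{\lambda}^*e_v\|=|\lambda_v|$, and $\sup_v|\lambda_v|$ can be strictly smaller than $\|S_{\lambda}\|=\sup_u\bigl(\sum_{w\in chi(u)}|\lambda_w|^2\bigr)^{1/2}$ (e.g.\ a vertex with several children of equal weight), a maximizing vector for $S_{\lambda}^*$ need not be a basis vector, and the paper does not justify why one can be chosen. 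Your argument avoids this entirely: you take an arbitrary maximizing unit vector $x$, squeeze $\|S_{\lambda}\|^2\leq\|S_{\lambda}^2x\|\leq\|S_{\lambda}^2\|\leq\|S_{\lambda}\|^2$ via the $\ast$-paranormal inequality, then use the mutual orthogonality of the vectors $S_{\lambda}^2e_u$ (disjoint grandchild sets in a tree) to write $\|S_{\lambda}^2x\|^2=\sum_u\|S_{\lambda}^2e_u\|^2|x(u)|^2$, and finally localize: from $\sum_u|x(u)|^2\bigl(\|S_{\lambda}\|^4-\|S_{\lambda}^2e_u\|^2\bigr)=0$ with nonnegative terms, any $v$ in the support of $x$ satisfies $\|S_{\lambda}^2e_v\|=\|S_{\lambda}\|^2$. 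What the paper's route buys is brevity once its characterization theorem is in place; what yours buys is that it closes the gap about \emph{where} $S_{\lambda}^*$ attains its norm, at the modest cost of invoking the disjoint-support identity for $\|S_{\lambda}^2f\|$, which is in any case the content of the paper's first theorem.
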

	\begin{proof}
		Since $S_{\lambda}$ is a $\ast$-paranormal weighted shift on $\ell^2(V)$ we have,
		\[\| S_{\lambda}^2e_v\|^2-2k \|S_{\lambda}^*e_v\|^2+k^2 \geq 0 \]
		for all $k>0, v\in V$. Since $S_{\lambda}^*$ is norm attaining, there exists $e_v \in \ell^2(V)$ such that \mbox{$\|S_{\lambda}^*e_v\|=\|S_{\lambda}\|$}. Let $k=\|S_{\lambda}\|^2$. Then we have,
		\[\|S_{\lambda}^2e_v\|^2 \geq \|S_{\lambda}\|^4.\]
		That is, $\|S_{\lambda}^2e_v\| \geq \|S_{\lambda}\|^2.$ Hence $\|S_{\lambda}^2e_v\| = \|S_{\lambda}\|^2$.
	\end{proof}
	\begin{corollary}
		Let $S_{\lambda}$ be a $\ast$-paranormal weighted shift on $\ell^2(V)$  such that $S_{\lambda}^*$ is norm attaining, then $S_{\lambda}^2$ is norm attaining.
	\end{corollary}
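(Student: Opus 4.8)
The plan is to invoke the preceding Corollary~\ref{gr*para} directly and then to squeeze the operator norm of $S_{\lambda}^2$ between two matching bounds. Since $S_{\lambda}$ is $\ast$-paranormal and $S_{\lambda}^*$ is norm attaining, Corollary~\ref{gr*para} supplies a vertex $v \in V$ for which $\|S_{\lambda}^2 e_v\| = \|S_{\lambda}\|^2$. The goal is to exhibit a unit vector at which $S_{\lambda}^2$ attains its norm, and $e_v$ is the natural candidate.

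First I would record the two inequalities that pin down $\|S_{\lambda}^2\|$. On one side, submultiplicativity of the operator norm gives $\|S_{\lambda}^2\| \leq \|S_{\lambda}\|^2$. On the other side, since $\|e_v\| = 1$ we have $\|S_{\lambda}^2 e_v\| \leq \|S_{\lambda}^2\|\,\|e_v\| = \|S_{\lambda}^2\|$. Substituting the identity $\|S_{\lambda}^2 e_v\| = \|S_{\lambda}\|^2$ from Corollary~\ref{gr*para} into the latter bound yields $\|S_{\lambda}\|^2 \leq \|S_{\lambda}^2\|$.

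Combining the two, I obtain the chain $\|S_{\lambda}\|^2 \leq \|S_{\lambda}^2\| \leq \|S_{\lambda}\|^2$, which forces $\|S_{\lambda}^2\| = \|S_{\lambda}\|^2$. Feeding this equality back, $\|S_{\lambda}^2 e_v\| = \|S_{\lambda}\|^2 = \|S_{\lambda}^2\| = \|S_{\lambda}^2\|\,\|e_v\|$, so the unit vector $e_v$ witnesses that $S_{\lambda}^2$ is norm attaining, completing the argument.

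There is no serious obstacle here; Corollary~\ref{gr*para} does the heavy lifting, and the remaining content is the elementary squeeze above. The only points requiring care are that the same vector $e_v$ produced by the corollary is the one used to realise the norm of $S_{\lambda}^2$, and that the normalisation $\|e_v\| = 1$ is tracked consistently so that the definition of norm attainment is met exactly rather than merely up to a scalar.
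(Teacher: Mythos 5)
Your proposal is correct and is essentially the paper's own argument: the paper likewise applies Corollary~\ref{gr*para} to obtain a vertex $v$ with $\|S_{\lambda}^2e_v\| = \|S_{\lambda}\|^2$ and then squeezes $\|S_{\lambda}\|^2=\|S_{\lambda}^2e_v\| \leq \|S_{\lambda}^2\| \leq \|S_{\lambda}\|^2$, so $e_v$ witnesses norm attainment of $S_{\lambda}^2$. The only difference is that you spell out the final bookkeeping explicitly, which the paper leaves implicit.
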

	\begin{proof}
		From Corollary~\ref{gr*para}, we have $\|S_{\lambda}\|^2=\|S_{\lambda}^2e_v\| \leq \|S_{\lambda}^2\| \leq \|S_{\lambda}\|^2.$ 
	\end{proof}
	\begin{remark}
		Let $S_{\lambda}$ be absolutely norm attaining $\ast$-paranormal weighted shift on $\ell^2(V)$ with weights $(\lambda_v)_{v\in V^0}$, then $S_{\lambda}^2$ is also absolutely norm attaining.
	\end{remark}
	\begin{example}\label{ex}
		Consider the directed graph $G=(V,E)$ with weights $\lambda_{ij}=\begin{cases}
			\frac{1}{9}, & i=j=1\\
			\frac{1}{3},& i=2,j=1\\
			\frac{1}{4}, & i=j=2\\
			2, & \text{otherwise}
		\end{cases}$.
		\begin{figure}[h!]
			\begin{tikzpicture}[line cap=round,line join=round,>=triangle 45,x=.7cm,y=.7cm]
				\clip(-9.323030663277001,-2.640433489146846) rectangle (08.28333942857835,2.58275472164352);
				\draw [->,line width=0.4pt] (-5,0)-- (-3,2);
				\draw [->,line width=0.4pt] (-3,2)-- (-1,2);
				\draw [->,line width=0.4pt] (-1,2)-- (1,2);
				\draw [->,line width=0.4pt] (-5,0)-- (-3,-2);
				\draw [->,line width=0.4pt] (-3,-2)-- (-1,-2);
				\draw [->,line width=0.4pt] (-1,-2)-- (1,-2);
				\begin{scriptsize}
					\draw [fill=black] (-5,0) circle (2.5pt);
					\draw[color=black] (-5.401442887607267,0.2530088013793716) node {$u_0$};
					\draw [fill=black] (-3,2) circle (2.5pt);
					\draw[color=black] (-2.925823133557061,2.4363474094839224) node {$u_{11}$};
					\draw[color=black] (-2.925823133557061,1.563474094839224) node {$\frac{1}{9}$};
					\draw [fill=black] (-1,2) circle (2.5pt);
					\draw[color=black] (-0.9789765308573843,2.4363474094839224) node {$u_{12}$};
					\draw[color=black] (-0.9789765308573843,1.563474094839224) node {$2$};
					\draw [fill=black] (1,2) circle (2.5pt);
					\draw[color=black] (0.967870071842292,2.4363474094839224) node {$u_{13}$};
					\draw[color=black] (0.967870071842292,1.563474094839224) node {$2$};
					\draw [fill=black] (1.9,2) circle (.8pt);
					\draw [fill=black] (2.3,2) circle (.8pt);
					\draw [fill=black] (2.7,2) circle (.8pt);
					\draw [fill=black] (-3,-2) circle (2.5pt);
					\draw[color=black] (-2.925823133557061,-1.529451225645034) node {$u_{21}$};
					\draw[color=black] (-2.925823133557061,-2.39551225645034) node {$\frac{1}{3}$};
					\draw [fill=black] (-1,-2) circle (2.5pt);
					\draw[color=black] (-0.9068711011277666,-1.529451225645034) node {$u_{22}$};
					\draw[color=black] (-0.9068711011277666,-2.39451225645034) node {$\frac{1}{4}$};
					\draw [fill=black] (1,-2) circle (2.5pt);
					\draw[color=black] (0.8476943556262626,-1.529451225645034) node {$u_{23}$};
					\draw[color=black] (0.8476943556262626,-2.39451225645034) node {$2$}; 	
					\draw [fill=black] (1.9,-2) circle (.8pt);
					\draw [fill=black] (2.3,-2) circle (.8pt);
					\draw [fill=black] (2.7,-2) circle (.8pt);
				\end{scriptsize}
			\end{tikzpicture}
			\caption{Directed tree with two branches on the root}	
		\end{figure}
		\begin{align*}
			S_{\lambda}^*f &=\left(\dfrac{1}{9}\alpha_{11}+\dfrac{1}{3}\alpha_{21}\right)e_0+2\sum\limits_{j=2}^\infty \alpha_{1j}e_{1,j-1} + \dfrac{1}{4}\alpha_{22}e_{21}+2\sum\limits_{j=3}^\infty \alpha_{2j}e_{2,j-1}
		\end{align*}
		\begin{align*}
			S_{\lambda}f &= \alpha_0(\lambda_{11}e_{11}+\lambda_{21}e_{21})+ \sum\limits_{j=1}^\infty \alpha_{1j}\lambda_{1,j+1}e_{1,j+1} + \sum\limits_{j=1}^\infty \alpha_{2j}\lambda_{2,j+1}e_{2,j+1}\\
			& = \alpha_0\left(\dfrac{1}{9}e_{11}+\dfrac{1}{3}e_{21}\right)+ 2\sum\limits_{j=1}^\infty \alpha_{1j}e_{1,j+1} + \dfrac{1}{4}\alpha_{21}e_{22}+2 \alpha_{22}e_{23}+2\sum\limits_{j=3}^\infty \alpha_{2j}e_{2,j+1}\\
			S_{\lambda}^2f &=\alpha_0 \left(\dfrac{2}{9}e_{12}+\dfrac{1}{12}e_{22}\right) + 4\sum\limits_{j=1}^\infty \alpha_{1j}e_{1,j+2}+\dfrac{1}{2}\alpha_{21}e_{23}+4 \alpha_{22}e_{24}+4\sum\limits_{j=3}^\infty \alpha_{2j}e_{2,j+2}.
		\end{align*}
		Therefore, 
		\begin{align*}
			\|S_{\lambda}^*f\|^2&= \left|\dfrac{1}{9}\alpha_{11}+\dfrac{1}{3}\alpha_{21}\right|^2+ 4\sum\limits_{j=2}^\infty |\alpha_{1j}|^2 + \dfrac{1}{16}|\alpha_{22}|^2+4\sum\limits_{j=3}^\infty |\alpha_{2j}|^2\\
			\|S_{\lambda}^2f \|^2 &=\dfrac{73}{1296}|\alpha_0|^2+16|\alpha_{11}|^2+16\sum\limits_{j=2}^\infty |\alpha_{1j}|^2+\dfrac{1}{4}|\alpha_{21}|^2+16 |\alpha_{22}|^2 +16\sum\limits_{j=3}^\infty |\alpha_{2j}|^2.
		\end{align*}
		Let $|\alpha_0|^2=x, |\alpha_{11}|^2=y, \sum\limits_{j=2}^\infty |\alpha_{1j}|^2 =z,|\alpha_{21}|^2=p,|\alpha_{22}|^2=q $ and $\sum\limits_{j=3}^\infty |\alpha_{2j}|^2=r$, we get $\|f\|^2=x+y+z+p+q+r$ and $\|S_{\lambda}^*f\|^4 \leq \|S_{\lambda}^2f \|^2\|f\|^2$ for every $f \in \ell^2(V)$ and $x,y,z,p,q,r\geq 0$. That is, $S_{\lambda}$ is $\ast$-paranormal. Also we observe that $\dfrac{1}{3}=\|S_{\lambda}^*e_{21}\|> \|S_{\lambda}e_{21}\|=\dfrac{1}{4}$, hence $S_{\lambda}$ is not hyponormal.
		
		Now we show that $S_{\lambda}$ is norm attaining on $\ell^2(V)$. We have 
		\begin{align*}
			\|S_{\lambda}f\|^2&=|\alpha_0|^2\left(\dfrac{1}{81}+\dfrac{1}{9}\right)+ 4\sum\limits_{j=1}^\infty |\alpha_{1j}|^2 + \dfrac{1}{16}|\alpha_{21}|^2+4\sum\limits_{j=2}^\infty |\alpha_{2j}|^2
		\end{align*}
			\begin{align*}
			\|S_{\lambda}\|^2&= \sup\limits_{\substack{f\in \ell^2(V)\\ \|f\|=1 }}\left( |\alpha_0|^2\left(\dfrac{1}{81}+\dfrac{1}{9}\right)+ 4\sum\limits_{j=1}^\infty |\alpha_{1j}|^2 + \dfrac{1}{16}|\alpha_{21}|^2+4\sum\limits_{j=2}^\infty |\alpha_{2j}|^2\right)\\
			&=\sup\limits_{\substack{f\in \ell^2(V)\\ \|f\|=1 }}\left(4\left( |\alpha_0|^2 +\sum\limits_{j=1}^\infty |\alpha_{1j}|^2+\sum\limits_{j=1}^\infty |\alpha_{2j}|^2 \right)- \dfrac{314}{81}|\alpha_0|^2 - \dfrac{63}{81}|\alpha_{21}|^2 \right)\\
			&=\sup\limits_{\substack{f\in \ell^2(V)\\ \|f\|=1 }} \left(4\|f\|^2- \dfrac{314}{81}|\alpha_0|^2 - \dfrac{63}{81}|\alpha_{21}|^2\right) \\
			&=4.
		\end{align*}
		That is, $\|S_{\lambda}\|=2$. Also we have $\|S_{\lambda}e_{11}\|= \|2e_{12}\|=2=\|S_{\lambda}\|$. Therefore $S_{\lambda}$ is a norm attaining $\ast$-paranormal operator on $\ell^2(V)$.
	\end{example}
	From the above example we have $\|S_{\lambda}^2e_{11}\| =\|2S_{\lambda}e_{11}\|=\|S_{\lambda}\|\|S_{\lambda}e_{12}\|=\|S_{\lambda}\|\|S_{\lambda}e_{11}\|$. Hence, $\overline{span\{Des(e_{11})\}}$ is an invariant subspace for $S_{\lambda}$ in $\ell^2(V)$.
	\section{Quasi-$\ast$-paranormal weighted shifts on directed trees}
	In this section we study quasi-$\ast$-paranormal weighted shifts on directed graphs.
	\begin{theorem}
		Let $S_{\lambda}$ be a quasi-$\ast$-paranormal weighted shift on $\ell^2(V)$ with weights $(\lambda_v)$, then 
		\[\left(\sum\limits_{u\in V} \|S_{\lambda}e_u\|^4|f(u)|^2\right)^2 \leq \left(\sum\limits_{u\in V}\|S_{\lambda}^3e_u\|^2 |f(u)|^2\right)\left(\sum\limits_{u\in V} \|S_{\lambda}e_u\|^2 |f(u)|^2\right).\]
	\end{theorem}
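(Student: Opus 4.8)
The plan is to mirror the proof of the section's opening theorem: I would apply the defining quasi-$\ast$-paranormal inequality to an arbitrary $f \in \ell^2(V)$, square it, and then evaluate each of the three resulting norms explicitly in terms of the weights and the scalars $f(u)$. Since $S_{\lambda}$ is quasi-$\ast$-paranormal, we have $\|S_{\lambda}^* S_{\lambda} f\|^2 \leq \|S_{\lambda}^3 f\|\,\|S_{\lambda} f\|$ for every $f$, so squaring gives $\|S_{\lambda}^* S_{\lambda} f\|^4 \leq \|S_{\lambda}^3 f\|^2\,\|S_{\lambda} f\|^2$. It then remains to identify the three factors with the sums appearing in the statement.

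The key step is the left-hand factor, and the crucial observation is that $S_{\lambda}^* S_{\lambda}$ is diagonal with respect to the orthonormal basis $\{e_u\}_{u \in V}$. Using $S_{\lambda} e_u = \sum_{v \in chi(u)} \lambda_v e_v$ together with the adjoint formula $S_{\lambda}^* e_v = \overline{\lambda_v}\, e_{par(v)}$ (which follows at once from the given expression for $S_{\lambda}^*$, since $v$ has the unique parent $par(v)$), I would compute
\[
S_{\lambda}^* S_{\lambda} e_u = \sum_{v \in chi(u)} |\lambda_v|^2\, e_u = \|S_{\lambda} e_u\|^2\, e_u.
\]
Hence $S_{\lambda}^* S_{\lambda} f = \sum_{u \in V} \|S_{\lambda} e_u\|^2 f(u)\, e_u$, and orthonormality yields
\[
\|S_{\lambda}^* S_{\lambda} f\|^2 = \sum_{u \in V} \|S_{\lambda} e_u\|^4 |f(u)|^2,
\]
which is exactly the quantity whose square sits on the left of the claimed inequality.

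For the two factors on the right I would exploit the tree structure. Distinct vertices have disjoint sets of $k$-th generation descendants, since every $w$ determines its ancestor $par^{(k)}(w)$ uniquely; consequently the vectors $\{S_{\lambda}^k e_u\}_{u \in V}$ have pairwise disjoint supports and are mutually orthogonal for each fixed $k$. Applying this with $k=1$ and $k=3$ to $S_{\lambda} f = \sum_u f(u)\, S_{\lambda} e_u$ and $S_{\lambda}^3 f = \sum_u f(u)\, S_{\lambda}^3 e_u$ makes every cross term vanish, giving
\[
\|S_{\lambda} f\|^2 = \sum_{u \in V} \|S_{\lambda} e_u\|^2 |f(u)|^2 \quad\text{and}\quad \|S_{\lambda}^3 f\|^2 = \sum_{u \in V} \|S_{\lambda}^3 e_u\|^2 |f(u)|^2.
\]
Substituting these three identities into $\|S_{\lambda}^* S_{\lambda} f\|^4 \leq \|S_{\lambda}^3 f\|^2 \|S_{\lambda} f\|^2$ then produces the stated inequality.

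I expect the diagonalization of $S_{\lambda}^* S_{\lambda}$ to be the main point requiring care: one must check that the products $\lambda_v \overline{\lambda_v}$ collapse to $|\lambda_v|^2$ and that each term returns to $e_u$ via $par(v) = u$ for $v \in chi(u)$. Once this identity is established, the orthogonality arguments for the remaining two norms are routine and proceed exactly as in the computation of $\|S_{\lambda}^2 f\|^2$ in the opening theorem of the section.
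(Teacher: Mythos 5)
Your proposal is correct and takes essentially the same route as the paper: apply the defining inequality $\|S_{\lambda}^*S_{\lambda}f\|^2 \leq \|S_{\lambda}^3f\|\,\|S_{\lambda}f\|$, square it, and identify the three norms with the weighted sums over the basis $\{e_u\}_{u\in V}$. If anything, your justification of those identities (the diagonality $S_{\lambda}^*S_{\lambda}e_u = \|S_{\lambda}e_u\|^2 e_u$ and the mutual orthogonality of $\{S_{\lambda}^k e_u\}_{u\in V}$ via disjoint $k$-th generation descendant sets) is more careful than the paper's, which essentially asserts them.
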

	\begin{proof}
		Since $S_{\lambda}$ be a quasi-$\ast$-paranormal,
		\[\|S_{\lambda}^*S_{\lambda}f\|^2 \leq \|S_{\lambda}^3f\|\|S_{\lambda}f\| \mbox{ for all }f \in \ell^2(V).\]
		We have,
		\[\|S_{\lambda}f\|^2 = \sum\limits_{u\in V} \|S_{\lambda}e_u\|^2 |f(u)|^2\]
		and
		\[\|S_{\lambda}^*S_{\lambda}f\|^2 = \sum\limits_{u\in V} \|S_{\lambda}e_u\|^4|f(u)|^2.\]
		Now,
		\begin{align*}
			S_{\lambda}^3f(u)&= S_{\lambda}\left( \lambda_u S_{\lambda}^2f(par(u))\right)\\
			& =S_{\lambda}\left( \lambda_u \lambda_{par(u)}S_{\lambda}f(par^2(u))\right)\\
			&= \lambda_u \lambda_{par(u)} \lambda_{par^2(u)}f(par^3(u))\\
			\sum\limits_{u\in V}	S_{\lambda}^3f(u)&= \sum\limits_{v\in chi(u)} \lambda_v \sum\limits_{w\in chi(v)} \lambda_w \sum\limits_{t\in chi(w)} \lambda_t f(u).
		\end{align*}
		Hence, 
		\[\|S_{\lambda}^3f\|^2 = \sum\limits_{u\in V}\|S_{\lambda}^3e_u\|^2 |f(u)|^2.\]
		Thus, 
		\[\left(\sum\limits_{u\in V} \|S_{\lambda}e_u\|^4|f(u)|^2\right)^2 \leq \left(\sum\limits_{u\in V}\|S_{\lambda}^3e_u\|^2 |f(u)|^2\right)\left(\sum\limits_{u\in V} \|S_{\lambda}e_u\|^2 |f(u)|^2\right).\]	
	\end{proof}
	\begin{theorem}
		Let $S_{\lambda}$ be a weighted shift on $\ell^2(V)$ with weights $(\lambda_v)_{v\in V^0}$ where \mbox{$V^0=V\setminus root(G)$}. Then $S_{\lambda}$ is quasi-$\ast$-paranormal if and only if 
		\[\|S_{\lambda}^3e_v\|^2 -2k \|S_{\lambda}e_v\|^4+k^2 \|S_{\lambda}e_v\|^2 \geq 0\]
		for all $k>0, v\in V.$
	\end{theorem}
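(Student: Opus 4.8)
The plan is to characterize the quasi-$\ast$-paranormal condition
\[\|S_{\lambda}^*S_{\lambda}f\|^2 \leq \|S_{\lambda}^3f\|\,\|S_{\lambda}f\| \quad \text{for all } f \in \ell^2(V)\]
by specializing to the orthonormal basis vectors $f = e_v$, exactly as was done for the $\ast$-paranormal case in the earlier theorem. First I would recall from the preceding theorem the three diagonalized quantities: since $S_{\lambda}^*S_{\lambda}$, $S_{\lambda}$, and $S_{\lambda}^3$ all act diagonally on the basis $\{e_u\}$ in the sense that $\|S_{\lambda}^*S_{\lambda}f\|^2 = \sum_u \|S_{\lambda}e_u\|^4 |f(u)|^2$, $\|S_{\lambda}f\|^2 = \sum_u \|S_{\lambda}e_u\|^2 |f(u)|^2$, and $\|S_{\lambda}^3 f\|^2 = \sum_u \|S_{\lambda}^3 e_u\|^2 |f(u)|^2$, evaluating at $f = e_v$ collapses each sum to a single term. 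This yields
\[\|S_{\lambda}e_v\|^4 \leq \|S_{\lambda}^3 e_v\|\,\|S_{\lambda}e_v\|.\]

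For the forward direction, I would square this inequality to get $\|S_{\lambda}e_v\|^8 \leq \|S_{\lambda}^3 e_v\|^2 \|S_{\lambda}e_v\|^2$, and then view the claimed inequality as a quadratic in the parameter $k$. The key algebraic observation is that the nonnegativity of the quadratic
\[q(k) = \|S_{\lambda}^3 e_v\|^2 - 2k\|S_{\lambda}e_v\|^4 + k^2 \|S_{\lambda}e_v\|^2\]
for all $k > 0$ is equivalent (via the discriminant, or by completing the square as $\|S_{\lambda}e_v\|^2(k - \|S_{\lambda}e_v\|^2)^2 + \|S_{\lambda}^3 e_v\|^2 - \|S_{\lambda}e_v\|^6$) to the single scalar inequality $\|S_{\lambda}e_v\|^6 \leq \|S_{\lambda}^3 e_v\|^2 \|S_{\lambda}e_v\|^2$, i.e. precisely the squared basis-vector version of quasi-$\ast$-paranormality. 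This is the same quadratic-in-$k$ trick used in the $\ast$-paranormal theorem earlier, just with the three relevant norms replaced.

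For the converse, I would run the implication backwards: starting from $q(k) \geq 0$ for all $k > 0$, choosing the minimizing value $k = \|S_{\lambda}e_v\|^2$ (assuming $\|S_{\lambda}e_v\| \neq 0$; the degenerate case $\|S_{\lambda}e_v\| = 0$ makes both sides vanish and is handled trivially) recovers $\|S_{\lambda}e_v\|^4 \leq \|S_{\lambda}^3 e_v\|\,\|S_{\lambda}e_v\|$ at each vertex $v$, and then reassembling the diagonal sums gives the full quasi-$\ast$-paranormal inequality for arbitrary $f$ by a term-by-term comparison (together with the Cauchy--Schwarz step to pass from the diagonalized inequalities back to $\|S_{\lambda}^3 f\|\,\|S_{\lambda}f\|$). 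The main subtlety I anticipate is justifying that the per-vertex inequality actually recombines to the full-vector inequality: this requires recognizing $\|S_{\lambda}^*S_{\lambda}f\|^2$ as a weighted sum $\sum_v \|S_{\lambda}e_v\|^4|f(v)|^2$ and then applying the Cauchy--Schwarz inequality to the pairing $\sum_v \bigl(\|S_{\lambda}^3e_v\||f(v)|\bigr)\bigl(\|S_{\lambda}e_v\||f(v)|\bigr)$, which is exactly the structural content already established in the previous theorem. Thus the hard part is bookkeeping rather than a genuine obstacle, and the proof reduces to the discriminant characterization of the quadratic $q(k)$.
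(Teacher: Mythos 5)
Your proof is correct, but it takes a genuinely different route from the paper's. The paper never works with the defining norm inequality directly: in both directions it invokes the operator-inequality characterization of quasi-$\ast$-paranormality cited from \cite{quasi*}, namely that $T$ is quasi-$\ast$-paranormal if and only if $T^{*3}T^3 - 2k(T^*T)^2 + k^2\,T^*T \geq 0$ for all $k>0$, and then exploits the fact that $S_{\lambda}^{*3}S_{\lambda}^3$, $(S_{\lambda}^*S_{\lambda})^2$ and $S_{\lambda}^*S_{\lambda}$ are simultaneously diagonal with respect to $\{e_v\}$ (each $e_v$ is an eigenvector with eigenvalues $\|S_{\lambda}^3e_v\|^2$, $\|S_{\lambda}e_v\|^4$, $\|S_{\lambda}e_v\|^2$ respectively), so that positivity of the operator pencil is equivalent to nonnegativity of its diagonal entries; the displayed inequality is literally $\langle (S_{\lambda}^{*3}S_{\lambda}^3 - 2k(S_{\lambda}^*S_{\lambda})^2 + k^2 S_{\lambda}^*S_{\lambda})e_v, e_v\rangle \geq 0$. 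You instead argue from the norm-inequality definition $\|S_{\lambda}^*S_{\lambda}f\|^2 \leq \|S_{\lambda}^3f\|\,\|S_{\lambda}f\|$: forward by restricting to $f=e_v$ and completing the square in $k$, converse by evaluating at the minimizer $k=\|S_{\lambda}e_v\|^2$ and reassembling the per-vertex inequalities via Cauchy--Schwarz. Your route is more self-contained, since it needs no external characterization, and the Cauchy--Schwarz step makes explicit why a per-vertex condition controls arbitrary $f$; the paper's route is shorter once the cited lemma is granted and avoids Cauchy--Schwarz altogether, because positivity of a diagonal operator is checked entrywise. One slip you should fix: the scalar inequality equivalent to $q(k)\geq 0$ for all $k>0$ is $\|S_{\lambda}e_v\|^6 \leq \|S_{\lambda}^3e_v\|^2$ (equivalently, after multiplying by $\|S_{\lambda}e_v\|^2$, the squared basis-vector inequality $\|S_{\lambda}e_v\|^8 \leq \|S_{\lambda}^3e_v\|^2\|S_{\lambda}e_v\|^2$), not $\|S_{\lambda}e_v\|^6 \leq \|S_{\lambda}^3e_v\|^2\|S_{\lambda}e_v\|^2$ as you wrote; your own completed-square expression $\|S_{\lambda}e_v\|^2\bigl(k-\|S_{\lambda}e_v\|^2\bigr)^2 + \|S_{\lambda}^3e_v\|^2 - \|S_{\lambda}e_v\|^6$ is the correct one, and it is what the argument should quote.
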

	\begin{proof}
		Let Let $S_{\lambda}$ be quasi-$\ast$-paranormal, then
		\[S_{\lambda}^{*3}S_{\lambda}^3-2k (S_{\lambda}^*S_{\lambda})^2+k^2  S_{\lambda}^*S_{\lambda} \geq 0 \mbox{ for all } k>0.\]
		Now we have,
		\begin{align*}
			S_{\lambda}^{*3}S_{\lambda}^3e_v& = \|S_{\lambda}^3e_v\|^2e_v\\
			S_{\lambda}^*S_{\lambda}e_v& = \|S_{\lambda}e_v\|^2e_v\\
			(S_{\lambda}^*S_{\lambda})^2e_v& = \|S_{\lambda}e_v\|^4e_v.		
		\end{align*}
		So we get,
		\[\langle \|S_{\lambda}^3e_v\|^2e_v-2k \|S_{\lambda}e_v\|^4e_v +k^2 \|S_{\lambda}e_v\|^2e_v, e_v \rangle  \geq 0\]
		and hence \[\|S_{\lambda}^3e_v\|^2 -2k \|S_{\lambda}e_v\|^4+k^2 \|S_{\lambda}e_v\|^2 \geq 0 \mbox{ for all } k>0, v\in V.\]
		\noindent
		For  the converse part, let $\|S_{\lambda}^3e_v\|^2 -2k \|S_{\lambda}e_v\|^4+k^2 \|S_{\lambda}e_v\|^2 \geq 0$ for all $k>0, v\in V.$ We have if $f \in \ell^2(V)$, then $f= \sum\limits_{v\in V} \alpha_ve_v$.
		Now,
		\begin{align*}
			S_{\lambda}^{*3}S_{\lambda}^3f-2k (S_{\lambda}^*S_{\lambda})^2f+k^2  S_{\lambda}^*S_{\lambda} f &= 	S_{\lambda}^{*3}S_{\lambda}^3\sum\limits_{v\in V} \alpha_ve_v-2k (S_{\lambda}^*S_{\lambda})^2\sum\limits_{v\in V} \alpha_ve_v+k^2  S_{\lambda}^*S_{\lambda} \sum\limits_{v\in V} \alpha_ve_v \\
			&=\sum\limits_{v\in V} \alpha_v (S_{\lambda}^{*3}S_{\lambda}^3-2k (S_{\lambda}^*S_{\lambda})^2+k^2  S_{\lambda}^*S_{\lambda})e_v\\
			&= \sum\limits_{v\in V} \alpha_v\|S_{\lambda}^3e_v\|^2e_v-2k \|S_{\lambda}e_v\|^4e_v +k^2 \|S_{\lambda}e_v\|^2e_v\\
			\langle S_{\lambda}^{*3}S_{\lambda}^3f-2k (S_{\lambda}^*S_{\lambda})^2f+k^2  S_{\lambda}^*S_{\lambda} f,f \rangle &= \sum\limits_{v\in V} \alpha_v \langle \|S_{\lambda}^3e_v\|^2e_v-2k \|S_{\lambda}e_v\|^4e_v +k^2 \|S_{\lambda}e_v\|^2e_v, e_v \rangle \\
			& \geq 0.
		\end{align*}
		Hence $S_{\lambda}$ is quasi-$\ast$-paranormal. 
	\end{proof}
	\begin{corollary}
		Let $S_{\lambda}$ be a norm attaining quasi-$\ast$-paranormal weighted shift on $\ell^2(V)$ with weights $(\lambda_v)_{v\in V^0}$. Then there exists a $v \in V$ such that $\|S_{\lambda}^3e_v\| = \|S_{\lambda}\|^3.$
	\end{corollary}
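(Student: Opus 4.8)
The plan is to combine the norm-attainment criterion of Theorem~\ref{grapg char} with the operator inequality from the preceding characterization of quasi-$\ast$-paranormal weighted shifts, exactly as in the proof of Corollary~\ref{gr*para}. First I would invoke the fact, recorded in Theorem~\ref{grapg char} and drawn from \cite{graph}, that $\|S_{\lambda}\|^2=\sup_{u\in V}\|S_{\lambda}e_u\|^2$ and that $\|S_{\lambda}e_u\|^2=\sum_{w\in chi(u)}|\lambda_w|^2$. Since $S_{\lambda}$ is norm attaining, this supremum is attained, so there exists a vertex $v\in V$ with $\|S_{\lambda}e_v\|=\|S_{\lambda}\|$. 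This is the only place the hypothesis of norm attainment is used, and it reduces the problem to a single vertex.

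Next I would feed this vertex into the scalar inequality characterizing quasi-$\ast$-paranormality, namely
\[
\|S_{\lambda}^3e_v\|^2 -2k\,\|S_{\lambda}e_v\|^4+k^2\,\|S_{\lambda}e_v\|^2 \geq 0 \quad \text{for all } k>0.
\]
Substituting $\|S_{\lambda}e_v\|=\|S_{\lambda}\|$ turns this into $\|S_{\lambda}^3e_v\|^2-2k\|S_{\lambda}\|^4+k^2\|S_{\lambda}\|^2\geq 0$. The decisive choice is $k=\|S_{\lambda}\|^2$, which collapses the middle and last terms: the inequality becomes $\|S_{\lambda}^3e_v\|^2-2\|S_{\lambda}\|^6+\|S_{\lambda}\|^6\geq 0$, that is, $\|S_{\lambda}^3e_v\|^2\geq\|S_{\lambda}\|^6$, and hence $\|S_{\lambda}^3e_v\|\geq\|S_{\lambda}\|^3$.

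Finally I would close the estimate from the other side using the elementary submultiplicative bound $\|S_{\lambda}^3e_v\|\leq\|S_{\lambda}^3\|\,\|e_v\|\leq\|S_{\lambda}\|^3$, valid for any bounded operator. Combining the two inequalities yields $\|S_{\lambda}^3e_v\|=\|S_{\lambda}\|^3$, as claimed. I do not anticipate a genuine obstacle here: the argument is a direct three-power analogue of Corollary~\ref{gr*para}, and the only subtlety worth flagging is that the optimal value $k=\|S_{\lambda}\|^2$ must be selected so that the quadratic in $k$ vanishes at its natural critical point, forcing the lower bound to be sharp.
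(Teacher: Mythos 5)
Your proposal is correct and follows essentially the same route as the paper's proof: both reduce norm attainment to a basis vector $e_v$ with $\|S_{\lambda}e_v\|=\|S_{\lambda}\|$ (via Theorem~\ref{grapg char}), substitute into the scalar quasi-$\ast$-paranormality inequality with the choice $k=\|S_{\lambda}\|^2$, and conclude $\|S_{\lambda}^3e_v\|\geq\|S_{\lambda}\|^3$. Your explicit closing step $\|S_{\lambda}^3e_v\|\leq\|S_{\lambda}\|^3$ is left implicit in the paper, so if anything your write-up is slightly more complete.
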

	\begin{proof}
		Since $S_{\lambda}$ is a quasi-$\ast$-paranormal weighted shift on $\ell^2(V)$ we have,
		\[\|S_{\lambda}^3e_v\|^2 -2k \|S_{\lambda}e_v\|^4+k^2 \|S_{\lambda}e_v\|^2 \geq 0\]
		for all $k>0, v\in V$. Since $S_{\lambda}$ is norm attaining, there exists $e_v \in \ell^2(V)$ such that \mbox{$\|S_{\lambda}e_v\|=\|S_{\lambda}\|$}. Fix $k=\|S_{\lambda}\|^2$ and simplifying, we get 
		\[\|S_{\lambda}^3e_v\|^2 \geq \|S_{\lambda}\|^6.\]
		That is, $\|S_{\lambda}^3e_v\| =\|S_{\lambda}\|^3.$
	\end{proof}
	\begin{corollary}
		Let $S_{\lambda}$ be a norm attaining quasi-$\ast$-paranormal weighted shift on $\ell^2(V)$ with weights $(\lambda_v)_{v\in V^0}$. Then $S_{\lambda}^3$ is norm attaining.
	\end{corollary}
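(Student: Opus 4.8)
The plan is to derive this immediately from the preceding corollary, which supplies a basis vector $e_v$ at which the third power already behaves like the full norm. First I would recall that for any bounded operator the norm is submultiplicative, so $\|S_\lambda^3\| \leq \|S_\lambda\|^3$ holds automatically, with no hypothesis on $S_\lambda$. The whole content of the statement is therefore to produce a single unit vector witnessing the reverse inequality as an attained value.

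Next I would invoke the previous corollary: since $S_\lambda$ is a norm attaining quasi-$\ast$-paranormal weighted shift, there exists $v \in V$ with
\[
\|S_\lambda^3 e_v\| = \|S_\lambda\|^3 .
\]
Because $e_v$ is a unit basis vector, $\|e_v\| = 1$, and so $\|S_\lambda^3 e_v\|$ is a genuine candidate for the operator norm of $S_\lambda^3$. Chaining the two facts gives the sandwich
\[
\|S_\lambda\|^3 = \|S_\lambda^3 e_v\| \leq \|S_\lambda^3\| \leq \|S_\lambda\|^3 ,
\]
forcing equality throughout. In particular $\|S_\lambda^3 e_v\| = \|S_\lambda^3\| = \|S_\lambda^3\|\,\|e_v\|$, which is exactly the assertion that $S_\lambda^3$ attains its norm at $e_v$.

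I do not anticipate any real obstacle here, since the argument is a direct transcription of the proof used earlier for the $\ast$-paranormal case (where $\|S_\lambda^2 e_v\| = \|S_\lambda\|^2$ was shown to imply that $S_\lambda^2$ is norm attaining). The only point worth stating explicitly is that the preceding corollary already carries all the quasi-$\ast$-paranormal structure, so that here one merely combines it with submultiplicativity of the operator norm and the normalization $\|e_v\| = 1$. Thus the proof reduces to recording these inequalities and concluding that equality in the sandwich exhibits $e_v$ as a norm-attaining vector for $S_\lambda^3$.
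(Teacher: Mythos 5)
Your proof is correct and is essentially identical to the paper's own argument: both invoke the preceding corollary to obtain $e_v$ with $\|S_\lambda^3 e_v\| = \|S_\lambda\|^3$ and then conclude via the sandwich $\|S_\lambda\|^3 = \|S_\lambda^3 e_v\| \leq \|S_\lambda^3\| \leq \|S_\lambda\|^3$. No differences worth noting.
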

	\begin{proof}
		Since $S_{\lambda}$ be a norm attaining quasi-$\ast$-paranormal, there exists an $e_v \in \ell^2(V)$ such that $\|S_{\lambda}\|^3=\|S_{\lambda}^3e_v\| \leq \|S_{\lambda}^3\| \leq  \|S_{\lambda}\|^3$.
	\end{proof}
	\begin{remark}
		Let $S_{\lambda}$ be absolutely norm attaining quasi-$\ast$-paranormal weighted shift on $\ell^2(V)$ with weights $(\lambda_v)_{v\in V^0}$, then $S_{\lambda}^3$ is also absolutely norm attaining.
	\end{remark}
	
	\begin{theorem}
		Let $S_{\lambda}$ be a quasi-$\ast$-paranormal weighted shift on $\ell^2(V)$ with weights $(\lambda_v)_{v\in V^0}$ where $V^0=V\setminus root(G)$ then for $f\in \ell^2(V)$, there exists $c>0$ such that 
		\begin{enumerate}
			\item 
			$\sum\limits_{u \in V}\|S_{\lambda}e_u\|^4|f(u)|^2 \leq c\sum\limits_{u \in V}(1+\|S_{\lambda}e_u\|^2)|f(u)|^2 $
			\item 
			$\sum\limits_{u \in V}\|S_{\lambda}e_u\|^4|f(u)|^2 \leq c\sum\limits_{u \in V}(1+\|S_{\lambda}^3e_u\|^2)|f(u)|^2 $.
		\end{enumerate}
	\end{theorem}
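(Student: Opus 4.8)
The plan is to recast both inequalities as comparisons between the graph norm of $S_{\lambda}^*S_{\lambda}$ and the graph norms of $S_{\lambda}$ and of $S_{\lambda}^3$, and then to invoke the closed graph theorem exactly as in the densely defined $\ast$-paranormal case treated earlier. First I would record the three identities that the preceding computations already supply. Since $S_{\lambda}^*S_{\lambda}e_u=\|S_{\lambda}e_u\|^2e_u$, we get $\|S_{\lambda}^*S_{\lambda}f\|^2=\sum_{u\in V}\|S_{\lambda}e_u\|^4|f(u)|^2$, which is the common left-hand side of (1) and (2); and we already have $\|S_{\lambda}f\|^2=\sum_{u\in V}\|S_{\lambda}e_u\|^2|f(u)|^2$ together with $\|S_{\lambda}^3f\|^2=\sum_{u\in V}\|S_{\lambda}^3e_u\|^2|f(u)|^2$. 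Writing the graph norms $\|f\|_{S_{\lambda}}^2=\|f\|^2+\|S_{\lambda}f\|^2$ and $\|f\|_{S_{\lambda}^3}^2=\|f\|^2+\|S_{\lambda}^3f\|^2$, the two assertions become exactly $\|S_{\lambda}^*S_{\lambda}f\|^2\le c\,\|f\|_{S_{\lambda}}^2$ and $\|S_{\lambda}^*S_{\lambda}f\|^2\le c\,\|f\|_{S_{\lambda}^3}^2$.

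For statement (2) the argument should go through cleanly. The quasi-$\ast$-paranormal inequality $\|S_{\lambda}^*S_{\lambda}f\|^2\le\|S_{\lambda}^3f\|\,\|S_{\lambda}f\|$ shows that whenever $f\in D(S_{\lambda}^3)$, so that $f\in D(S_{\lambda})$ and the right-hand side is finite, the vector $S_{\lambda}f$ lies in $D(S_{\lambda}^*)$; hence $D(S_{\lambda}^3)\subseteq D(S_{\lambda}^*S_{\lambda})$. Since $S_{\lambda}$ is closed, both graph norms are complete, so the identity embedding $(D(S_{\lambda}^3),\|\cdot\|_{S_{\lambda}^3})\to(D(S_{\lambda}^*S_{\lambda}),\|\cdot\|_{S_{\lambda}^*S_{\lambda}})$ has closed graph and the closed graph theorem furnishes $c>0$ with $\|f\|_{S_{\lambda}^*S_{\lambda}}^2\le c\,\|f\|_{S_{\lambda}^3}^2$. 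Discarding the nonnegative term $\|f\|^2$ on the left and substituting the identities above yields (2) verbatim.

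Statement (1) I would attack in the same spirit, aiming at the embedding $(D(S_{\lambda}),\|\cdot\|_{S_{\lambda}})\to(D(S_{\lambda}^*S_{\lambda}),\|\cdot\|_{S_{\lambda}^*S_{\lambda}})$. The hard part here, and the step I expect to be the genuine obstacle, is the domain inclusion $D(S_{\lambda})\subseteq D(S_{\lambda}^*S_{\lambda})$ needed to make this embedding well defined: in contrast with (2), the quasi-$\ast$-paranormal inequality bounds $\|S_{\lambda}^*S_{\lambda}f\|$ only through $\|S_{\lambda}^3f\|$, which can be infinite for $f\in D(S_{\lambda})\setminus D(S_{\lambda}^3)$. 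Testing the desired inequality on $f=e_u$ forces $\|S_{\lambda}e_u\|^4\le c\,(1+\|S_{\lambda}e_u\|^2)$ for every $u$, i.e. uniform boundedness of the weights $\|S_{\lambda}e_u\|$, so the inclusion effectively holds precisely when $S_{\lambda}$ is bounded. I would therefore carry out (1) under the standing assumption that $S_{\lambda}$ is bounded, which is the setting of the surrounding norm-attaining results; in that case the estimate is immediate, since $\|S_{\lambda}^*S_{\lambda}f\|^2=\sum_{u\in V}\|S_{\lambda}e_u\|^4|f(u)|^2\le\|S_{\lambda}\|^2\sum_{u\in V}\|S_{\lambda}e_u\|^2|f(u)|^2\le\|S_{\lambda}\|^2\|f\|_{S_{\lambda}}^2$, giving (1) with $c=\|S_{\lambda}\|^2$.
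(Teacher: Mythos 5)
Your argument for part (2) is essentially the paper's own proof. The paper likewise deduces the inclusion $D(S_{\lambda}^3)\subseteq D(S_{\lambda}^{*}S_{\lambda})$ from quasi-$\ast$-paranormality, applies the closed graph theorem to the identity embedding of the graph-norm spaces to obtain $\|f\|_{S_{\lambda}^{*}S_{\lambda}}^2\le c\,\|f\|_{S_{\lambda}^3}^2$, and then rewrites the graph norms via the diagonal identities $\|S_{\lambda}^{*}S_{\lambda}f\|^2=\sum_{u\in V}\|S_{\lambda}e_u\|^4|f(u)|^2$, $\|S_{\lambda}^3f\|^2=\sum_{u\in V}\|S_{\lambda}^3e_u\|^2|f(u)|^2$. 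Both you and the paper leave implicit that the two graph norms are complete (this needs $S_{\lambda}^3$ closed, and $S_{\lambda}^{*}S_{\lambda}$ closed, the latter by von Neumann's theorem for a closed densely defined $S_{\lambda}$); that is the same level of rigor as the paper, so no complaint there.

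For part (1) you depart from the paper, and your objection pinpoints exactly the weak step in the paper's proof. The paper asserts that quasi-$\ast$-paranormality also yields $D(S_{\lambda})\subseteq D(S_{\lambda}^{*}S_{\lambda})$ and implicitly runs the same closed-graph argument on $(D(S_{\lambda}),\|\cdot\|_{S_{\lambda}})$; in fact the only closed-graph inequality the paper displays is the $S_{\lambda}^3$ one, which proves (2), so (1) rests entirely on that asserted inclusion. As you observe, the inequality $\|S_{\lambda}^{*}S_{\lambda}f\|^2\le\|S_{\lambda}^3f\|\,\|S_{\lambda}f\|$ gives no information for $f\in D(S_{\lambda})\setminus D(S_{\lambda}^3)$, and for a weighted shift the inclusion $D(S_{\lambda})\subseteq D(S_{\lambda}^{*}S_{\lambda})$, as well as inequality (1) with the uniform constant that the closed graph theorem would provide, forces $\sup_{u}\|S_{\lambda}e_u\|<\infty$, i.e.\ boundedness of $S_{\lambda}$. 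Your objection is backed by a genuine counterexample: the unilateral shift with weights $2^{n}$ (a directed tree with a single branch) satisfies $\|S_{\lambda}e_v\|^3\le\|S_{\lambda}^3e_v\|$ for every $v$, hence is quasi-$\ast$-paranormal by the paper's own characterization of quasi-$\ast$-paranormal weighted shifts, is unbounded, and admits no constant $c$ making (1) hold on all basis vectors. So (1) as stated holds only when $S_{\lambda}$ is bounded, in which case, as you note, it is immediate with $c=\|S_{\lambda}\|^2$ (and (2) with $c=\|S_{\lambda}\|^4$) without any paranormality hypothesis, while the unbounded densely defined setting is the only one in which the theorem has content. In short: your proof of (2) coincides with the paper's, and your treatment of (1) is not a gap in your argument but a correction of a step in the paper that cannot be justified as written.
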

	\begin{proof}
		Since $S_{\lambda}$ is quasi-$\ast$-paranormal, we have $D(S_{\lambda}^3)\subseteq D(S_{\lambda}^*S_{\lambda})$ and $D(S_{\lambda})\subseteq D(S_{\lambda}^*S_{\lambda})$.
		Hence from  closed graph theorem,  there exists a $c>0$ such that \[\|f\|_{S_{\lambda}^*S_{\lambda}}^2 \leq c\|f\|_{S_{\lambda}^3}^2. \]
		Since $\|f\|_T^2 = \|f\|^2+\|Tf\|^2$, we get
		\[\sum\limits_{u \in V}\|S_{\lambda}e_u\|^4|f(u)|^2 \leq c\sum\limits_{u \in V}(1+\|S_{\lambda}e_u\|^2)|f(u)|^2 .\]
		In a similar way we can prove (2).
	\end{proof}
	\begin{example}
		Consider the directed graph as in Example~\ref{ex} with weights,\\ $\lambda_{ij}=\begin{cases}
			1 & \mbox{ if $j=1$}\\
			2  & \mbox{ if $i=1$ and $j\geq2$}\\
			2 & \mbox{ if $i=j=2$}\\
			1  & \mbox{ if $i=2$ and $j=3$}\\
			4 & \mbox{ if $i=2$ and $j\geq4$}
		\end{cases}$. 
		
		Then,
		\begin{align*}
			&S_{\lambda}f&=&\alpha_0(e_{11}+e_{21})+2\sum\limits_{j=1}^{\infty}\alpha_{1j}e_{1,j+1} + 2\alpha_{21}e_{22}+\alpha_{22}e_{23}+4\sum\limits_{j=3}^{\infty}\alpha_{2j}e_{2,j+1} \\
			&S_{\lambda}^*S_{\lambda}f &=& 2\alpha_0e_0+4\sum\limits_{j=1}^{\infty}\alpha_{1j}e_{1,j} + 4\alpha_{21}e_{21}+\alpha_{22}e_{22}+16\sum\limits_{j=3}^{\infty}\alpha_{2j}e_{2,j} \\
			&S_{\lambda}^3f&=&\alpha_0(4e_{13}+2e_{23})+8\sum\limits_{j=1}^{\infty}\alpha_{1j}e_{1,j+3} + 8\alpha_{21}e_{24}+16\alpha_{22}e_{25}+64\sum\limits_{j=3}^{\infty}\alpha_{2j}e_{2,j+3} \\
			&\|S_{\lambda}f\|^2&=&2|\alpha_0|^2+4\sum\limits_{j=1}^{\infty}|\alpha_{1j}|^2 + 4|\alpha_{21}|^2+|\alpha_{22}|^2+16\sum\limits_{j=3}^{\infty}|\alpha_{2j}|^2 \\
			&\|S_{\lambda}^*S_{\lambda}f\|^2 &=& 4|\alpha_0|^2+16\sum\limits_{j=1}^{\infty}|\alpha_{1j}|^2 + 16|\alpha_{21}|^2+|\alpha_{22}|^2+256\sum\limits_{j=3}^{\infty}|\alpha_{2j}|^2 \\
			&\|S_{\lambda}^3f\|^2&=&20|\alpha_0|^2+64\sum\limits_{j=1}^{\infty}|\alpha_{1j}|^2 + 64|\alpha_{21}|^2+256|\alpha_{22}|^2+{64}^2\sum\limits_{j=3}^{\infty}|\alpha_{2j}|^2.
		\end{align*}
		Clearly, $\|S_{\lambda}^*S_{\lambda}f\|^2\leq \|S_{\lambda}^3f\|\|S_{\lambda}f\|$ for any choice of $f$. Hence $S_{\lambda}$ is quasi-$\ast$-paranormal.
	\end{example}
	It is known that for an absolutely norm attaining positive operator, the essential spectrum is a singleton set. Here we give an example of a positive operator on a directed tree whose essential spectrum is not a singleton set.
	\begin{example}
		Consider the graph $G=(V,E)$ as in Example~\ref{ex} with weights, 
		$\lambda_{ij}=i$. Clearly the operator $N_{\lambda}$ defined by $N_{\lambda}e_v= \lambda_ve_v$ with given weights is absolutely norm attaining on $\mathcal{B}(\ell^2(V))$ with $\|N_{\lambda}\|=2$. Also the essential spectrum, $\sigma_{ess}(N_{\lambda})=\{1,2\}$.
	\end{example}
	\begin{example}
		Consider the graph $G=(V,E)$ with a root vertex having 2 branches with weights $\lambda_{ij}=\begin{cases}
			i & \mbox{ if $j=1,2$}\\
			i-\frac{1}{j-1} & \mbox{ if $j>2$}
		\end{cases}$.
		Clearly the operator $N_{\lambda}$ defined as above is absolutely norm attaining on $\mathcal{B}(\ell^2(V))$ with $\|S_{\lambda}\|=2$. Also the essential spectrum, $\sigma_{ess}(N_{\lambda})=\{1,2\}$. Here 1 and 2 are limits of a decreasing sequence in the spectrum.
	\end{example}
	The above examples shows that the spectrum of a positive operator has atmost one limit point or an eigenvalue with infinite multiplicity is not true in every space. In graph setting, we observe that there can be more than one (possibly infinite) limit points/ eigenvalues with infinite multiplicity.
	\section{Weighted Composition Operators on Directed Graphs}
	In this section, we study $\ast$-paranormal and quasi-$\ast$-paranormal weighted composition operators on $L^2(V, \mathscr{V},\mu)$.
	
	 Let $u, w: V \to \mathbb{C}$ be two measurable functions. The Lambert operator on $L^2(\mu)$ is defined as $E_{u,w}f:= w\cdot E(uf)$ for every $f \in L^2(\mu)$. In \cite{lambert quasi}, Estaremi showed that $E_{u,w}$ is quasi-$\ast$-paranormal if and only if $E(|u|^2)E(|w|^2)\leq |E(uw)|^2$ a.e. on $S(E(|w|^2))$ where $S(f)$ denotes the support of $f$ defined by $S(f)=\{v\in V: f(v)\neq 0\}$. 
\begin{theorem}
Let $W:=u.f\circ \psi$ be a weighted composition operator on $L^2(\mu)$. Then
\begin{enumerate}
	\item $W$ is $\ast$-paranormal if and only if 
		\[h_2E_2(|u|_2^2)\circ \psi^{-2}f-2 \lambda u\cdot h_1\circ \psi \cdot E_1(\bar{u}f) + \lambda^2 f \geq 0 \text{ for every }f \in L^2(\mu),~ \lambda\geq 0.\]
		\item $W$ is quasi-$\ast$-paranormal if and only if 
		\[\left(h_3\cdot E_3(|u|_3^2)\circ \psi^{-3} - 2\lambda (h_1\cdot E_1(|u|^2)\circ \psi^{-1})^2+ \lambda^2 (h_1\cdot E_1(|u|^2)\circ\psi^{-1}) \right)f \geq 0\] for every $f \in L^2(\mu),~ \lambda\geq 0.$
\end{enumerate}
\end{theorem} 
	\begin{proof}
	We have $W^{*^n}W^n= h_n\cdot E_n(|u|_n^2)\circ \psi^{-n} $ \cite{m-isometry graph} such that  
		\[E_n(|u|_n^2)\circ \psi^{-n}(v)= \begin{cases}
			K^r_{i, j+p}& \text{ if } v=v^r_{i,j} \text{ for } r\in J_m, i\in J_{\eta_r}, \text{ and }j\in \mathbb{N},\\
			K^r_p& \text{ if } v=v_r \text{ for } r\in J_m.
		\end{cases}\]
		where $K^r_{i, j+n}= |u|_n^2(v^r_{i, j+n})$ 
		and 	
		$\displaystyle K_n^r= \frac{|u|_n^2(v_{{\Psi}_(n+r)})\mu(v_{{\Psi}_(n+r)})+\sum\limits_{j=1}^n \sum\limits_{\substack{s=1\\  \Psi(n+r)=\Psi(s+j)}}^m \sum\limits_{i=1}^{\eta_s}|u|_n^2(v^s_{i,j})\mu(v^s_{i,j})}{\mu(v_{{\Psi}_(n+r)})+\sum\limits_{j=1}^n \sum\limits_{\substack{s=1\\ \Psi_2(n+r)=\Psi(s+j)}}^m \sum\limits_{i=1}^{\eta_s}\mu(v^s_{i,j})}.$
	Also we can easily calculate $WW^*f= u\cdot h_1\circ \psi \cdot E_1(\bar{u}f)$. Since $W$ is $\ast$-paranormal if and only if $W^{*^2}W^2-2 \lambda WW^* + \lambda^2 \geq 0$ and quasi-$\ast$-paranormal if and only if \\$W^{*^3}W^3-2 \lambda (W^*W)^2 + \lambda^2 W^*W\geq 0$ for every $f \in L^2(\mu)$. By substitution, we have the results.
	\end{proof}
	\begin{example}
		Consider the directed graph with $m=3, \eta_1=2$ and $\eta_2=\eta_3=0$. Let \[u(v)=\begin{cases}
			1 & \text{ if } v= v_r, r\in J_3 \\
			2 & \text{ if } v=v_{i,j}^1,i\in J_2, j\in \mathbb{N}
		\end{cases}\] and $\mu(v_1)=\mu(v_2)=1, \mu(v_3)=1$ and $\mu(v_{i,j}^1)=\frac{1}{2}$ for $i\in J_2, j\in \mathbb{N}$. Then the weighted composition operator induced by $u$ and $\psi$ is quasi-$\ast$-paranormal operator. 
	\end{example}
	\begin{example}
		Consider the directed graph with $m=3, \eta_1=2$ and $\eta_2=\eta_3=0$. Let \[u(v)=\begin{cases}
			\frac{1}{4} & if v= v_r, r\in J_3 \text{ or } v= v_{i,1}^1 , i\in J_2 \\
			4 & if v=v_{i,j}^1,i\in J_2, j\in \mathbb{N}\setminus \{1\}
		\end{cases}.\] If we are using the  counting measure, the weighted composition operator induced by $u$ and $\psi$ is quasi-$\ast$-paranormal operator.  
	\end{example}
	Now we discuss the norm of weighted composition operator on $L^2(\mu)$.
		We have $W(f)= \sum\limits_{v \in V}u(v)\cdot (f\circ \psi)(v)$ and hence $\|Wf\|^2=  \sum\limits_{v \in V}|u(v)\cdot (f\circ \psi)(v)|^2$. Since $\|W\|^2=\|W^*W\|$, we have $\|W\|^2=\sup |h_1\cdot E(|u_1|^2)\circ \psi^{-1}(v)|=
	\sup |h_1(v)\cdot|u(v)|^2|.$
	\begin{example}
		Consider the directed graph on Example 4.2. We have
		\[|h_1(v)\cdot|u(v)|^2|=\begin{cases}
			2 & \text{ if } v= v_1, \\
			1 & \text{ if } v=v_1, v_2\\
		4 & \text{ if } v=v_{i,j}^1,i\in J_2, j\in \mathbb{N}
		\end{cases}.\] Hence $\|W\|=2=\|We_{i,j}^1\|$ where $i \in J_2,j>1$. So $W$ is norm attaining.
	\end{example}
	
		Let $(V,E)$ be a directed graph. Let $u:V \to \mathbb{C}$ be two measurable functions and $f \in \ell^2(V)$. Let $\psi: V \to V$ be a non-singular measurable function. The weighted composition operator, $W:=W_{u,\psi}: \ell^2(V) \to \ell^2(V)$ defined by,
	\[Wf(w)= u(w)\cdot f(\psi(w)).\]
	The adjoint operator is defined as 
	\[W^*f(w)= \sum\limits_{v \in chi(w)}\overline{u(v)}f(v).\]
	 Now we have the following theorems on $\ast$-paranormal and quasi-$\ast$-paranormal operators.
	\begin{theorem}
		Let $W$ be a weighted composition operator defined on $\ell^2(V)$, then the following are equivalent.
		\begin{enumerate}
			\item $W$ is $\ast$-paranormal
			\item $\sum\limits_{w\in V}\left|\sum\limits_{v\in chi(w)}\overline{u(v)}g(v)\right|^2 \leq \left(\sum\limits_{w\in V}|u_2(w)|^2|g\circ \psi^2(w)|^2\right)^{1/2}\left(\sum\limits_{w\in V}|g(w)|^2\right)^{1/2}$
			\item  $\sum\limits_{w\in V}\left(\sum\limits_{v \in \psi^{-2}(w)}|u_2(v)|^2-2k\cdot u(w)\sum\limits_{v_1\in \psi^{-1}(w)}\overline{u(v)}+k^2  \right)g(w) \geq 0$ for every $k \geq 0$ and $g \in \ell^2(V)$.
		\end{enumerate}
		
	\end{theorem}
	\begin{proof}
For (1)$\iff$ (3).\\We have $W$ is $\ast$-paranormal if and only if $\|W^*g\|^2\leq \|W^2g\|\|g\|$ for every $f \in \ell^2(V)$. By simple calculations, we have $\|W^*f\|^2=\sum\limits_{w\in V}\left|\sum\limits_{v\in chi(w)}\overline{u(v)}g(v)\right|^2$ and $\|W^2g\|^2=\sum\limits_{w\in V}|u_2(w)|^2|g\circ \psi^2(w)|^2$. The result follows by substitution.\\		
For (1)$\iff$ (3).\\We have $W$ is $\ast$-paranormal if and only if  $W^{*^2}W^2-2k WW^* +k^2 \geq 0 $ for every $f \in \ell^2(V)$ and $k >0$. For $n \in \mathbb{N}$, we can find
 \[W^{*^n}W^nf(w)=\sum\limits_{v \in \psi^{-n}(w)}|u_n(v)|^2f(w).\]
	So we have $W^{*^2}W^2f(w)=\sum\limits_{v \in \psi^{-2}(w)}|u_2(v)|^2f(w)$ and $WW^*f = u(w)\sum\limits_{v_1\in \psi^{-1}(w)}\overline{u(v)}f(w)$. Hence the required result follows.
	\end{proof}
	\begin{theorem}
		Let $W$ be a weighted composition operator defined on $\ell^2(V)$, then the following are equivalent.
		\begin{enumerate}
			\item $W$ is quasi-$\ast$-paranormal
			\item $\sum\limits_{w\in V}\left|\sum\limits_{v\in chi(w)}|u(v)|^2\right|^2|f(w)|^2 \leq \left(\sum\limits_{w\in V}|u_3(w)|^2|f\circ \psi^3(w)|^2\right)^{1/2}\left(\sum\limits_{w\in V}|u_1(w)|^2|f\circ \psi(w)|^2\right)^{1/2}$
			\item  $\sum\limits_{w\in V}\left(\sum\limits_{v \in \psi^{-3}(w)}|u_3(v)|^2-2k (\sum\limits_{v_1\in \psi^{-1}(w)}|u(v_1)|^2)^2+k^2 \sum\limits_{v_1\in \psi^{-1}(w)}|u(v_1)|^2 \right)f(w) \geq 0$ for every $k \geq 0$ and $f \in \ell^2(V)$.
		\end{enumerate}
	\end{theorem}
	\begin{proof}
		Since $W$ is quasi-$\ast$-paranormal, we have $W^{*^3}W^3-2k (W^*W)^2+k^2W^*W \geq 0$ and $\|W^*Wf\|^2\leq \|W^3f\|\|Wf\|$ for every $k \geq 0$ and $f \in \ell^2(V)$. Since, $W^{*^n}W^nf(w)=\sum\limits_{v \in \psi^{-n}(w)}|u_n(v)|^2f(w)$, theorem follows.
	\end{proof}
	
	Now we give an example of a quasi-$\ast$-paranormal Lambert operators.
	\begin{example}
	Consider the directed graph with $m=3, \eta_1=2$ and $\eta_2=\eta_3=0$.
		Let $u(v)=\begin{cases}
			2 & \text{ if }v=v_2\\
			\frac{1}{2} & \text{ if }v=v_{1,1}^1\\
			\frac{1}{2} & \text{ if }v=v_{2,1}^1
		\end{cases}$, $w(v)=\begin{cases}
			\frac{1}{2} & \text{ if }v=v_2\\
			\frac{1}{4} & \text{ if }v=v_{1,1}^1\\
			\frac{1}{4} & \text{ if }v=v_{2,1}^1
		\end{cases}$  ,
		$\mu(v)=\begin{cases}
			2 & \text{ if }v=v_2\\
			4 & \text{ if }v=v_{1,1}^1\\
			4 & \text{ if }v=v_{2,1}^1
		\end{cases}$ and all other $u(v), w(v)$ and $\mu(v)$ are positive. Then $E_{u,w}$ defined on $L^2(V, \mathscr{V}, \mu)$ is quasi-$\ast$-paranormal.
	\end{example}

	\section*{Acknowledgements}
	The first author acknowledges the University of Kerala, Kerala, India for the financial support through a University
	JRF for the period of 2025-2026, under the reference number 2460/2025/UOK
	dated 19.03.2025.
	
	\bibliographystyle{amsplain}

\end{document}